\newtheorem{thm}{Theorem}
\newtheorem{lem}{Lemma}
\newtheorem{obs}{Observation}
\theoremstyle{definition}
\newtheorem{conj}{Conjecture}
\newtheorem{claim}{Claim}
\newtheorem{case}{Case}
\newtheorem{remark}[claim]{Remark}
\begin{document}

\title
{Rainbow triangles in arc-colored digraphs
\thanks{The first author is supported by GXNSF (Nos. 2016GXNSFFA38001 and 2018GXNSFAA138152) and Program on the High Level  Innovation Team and Outstanding Scholars in Universities of Guangxi Province; the second author is supported by  NSFC (Nos. 11571135 and 11671320)
and the third author is supported by the Fundamental Research Funds for the Central Universities (No. 31020180QD124).}}
\author{\quad Wei Li $^{a, b}$,
\quad Shenggui Zhang $^{b}$\thanks{Corresponding author. E-mail addresses: muyu.yu@163.com (W. Li), sgzhang@nwpu.edu.cn (S. Zhang), liruonan@mail.nwpu.edu.cn (R. Li). },
\quad Ruonan Li $^{b}$\\[2mm]
\small $^{a}$ College of Mathematics and Statistics, Guangxi Normal University, \\
\small Guilin,  541004, China\\
\small $^{b}$ Department of Applied Mathematics, School of Sciences, Northwestern Polytechnical University, \\
\small Xi'an,  710029, China\\}
\date{\today}

\maketitle

\begin{abstract}
Let $D$ be an arc-colored digraph. The arc number $a(D)$ of $D$ is defined as the number of arcs of $D$. The color number $c(D)$ of $D$ is defined as the number of colors assigned to the arcs of $D$. A rainbow triangle in $D$ is a directed triangle in which every pair of arcs have distinct colors. Let $f(D)$ be the smallest integer such that if $c(D)\geq f(D)$, then $D$ contains a rainbow triangle. In this paper we obtain $f(\overleftrightarrow{K}_{n})$ and $f(T_n)$, where $\overleftrightarrow{K}_{n}$ is a complete digraph of order $n$ and $T_n$ is a strongly connected tournament of order $n$. Moreover we characterize the arc-colored complete digraph $\overleftrightarrow{K}_{n}$ with $c(\overleftrightarrow{K}_{n})=f(\overleftrightarrow{K}_{n})-1$ and containing no rainbow triangles. We also prove that an arc-colored digraph $D$ on $n$ vertices contains a rainbow triangle when
$a(D)+c(D)\geq a(\overleftrightarrow{K}_{n})+f(\overleftrightarrow{K}_{n})$, which is a directed extension of the undirected case.

\medskip
\noindent {\bf Keywords:} arc-colored digraph, rainbow triangle, color number, complete digraph, strongly connected tournament

\smallskip

\end{abstract}

\section{Introduction}
In this paper we only consider finite digraphs without loops or multiple arcs. For terminology and notations not defined here, we refer the readers to \cite{Bang-Jensen: 2001} and \cite{Bondy: 2008}.

Let $D=(V, A)$ be a digraph. We use $a(D)$ to denote the number of arcs of $D$. If $uv\in A(D)$, then we say that $u$ {\it dominates } $v$ (or $v$ is {\it dominated} by $u$) and $uv$ is an {\it in-arc} of $v$ (or $uv$ is an {\it out-arc} of $u$). For a vertex $v$ of $D$, the {\it in-neighborhood } $N^{-}_{D}(v)$ of $v$ is the set of vertices dominating $v$, and  the {\it out-neighborhood } $N^{+}_{D}(v)$ of $v$ is the set of vertices dominated by $v$. The {\it in-degree} $d^{-}_{D}(v)$ and {\it out-degree} $d^{+}_{D}(v)$ of $v$ are defined as the cardinality of $N^{-}_{D}(v)$ and $N^{+}_{D}(v)$, respectively. The {\it degree} $d_{D}(v)$ of $v$ is defined as the sum of $d^{-}_{D}(v)$ and $d^{+}_{D}(v)$. 
A {\it complete digraph} is a digraph obtained from a complete graph $K_{n}$ by replacing each edge $xy$ of $K_{n}$ with a pair of arcs $xy$ and
$yx$, denoted by $\overleftrightarrow{K}_{n}$. A {\it complete bipartite digraph} is a digraph obtained from a complete bipartite graph $K_{m, n}$ by replacing each edge $xy$ of $K_{m, n}$ with a pair of arcs $xy$ and
$yx$, denoted by $\overleftrightarrow{K}_{m, n}$. A {\it tournament} is a digraph obtained from a complete graph $K_{n}$ by replacing each edge $xy$ of $K_{n}$ with exactly one of the arcs $xy$ and $yx$. A digraph $D$
is {\it strongly connected} if, for each pair of distinct vertices $x$ and $y$ in $D$, there exists an $(x, y)$-path. The subdigraph of $D$ induced by $S\subseteq V(D)$ is denoted by $D[S]$. An {\it arc-coloring} of $D$ is a mapping $C: A(D)\rightarrow \mathbb{N}$, where $\mathbb{N}$ is the set of natural numbers. We call $D$ an {\it arc-colored digraph} if it is assigned such an arc-coloring $C$. We use $C(D)$ and $c(D)$ (called the {\it color number} of $D$) to denote the set and the number of colors assigned to the arcs of $D$, respectively. If $c(D)=k$, then we call $D$ a {\it $k$-arc-colored digraph}. Let $D$ be an arc-colored digraph and $i$ a color in $C(D)$. We use $D^i$ to denote the arc-colored subdigraph of $D$ induced by all the arcs of color $i$. For a vertex $v\in D$, we use $CN^{-}_{D}(v)$ and $CN^{+}_{D}(v)$ to denote the set of colors assigned to the in-arcs and the out-arcs of $v$, respectively. The {\it color neighbor} $CN_{D}(v)$ of $v$ is defined as $CN_{D}(v)=CN^{-}_{D}(v)\bigcup CN^{+}_{D}(v)$. The {\it in-color degree} $d^{-c}_{D}(v)$ and the {\it out-color degree} $d^{+c}_{D}(v)$ of $v$ are the cardinality of $CN^{-}_{D}(v)$ and $CN^{+}_{D}(v)$, respectively. If there is no ambiguity, we often omit the subscript $D$ in the above notations. A {\it rainbow} digraph is a digraph in which every pair of arcs have distinct colors. A {\it rainbow triangle} is a directed triangle which is rainbow.

The existence of rainbow subgraphs has been widely studied, see the survey papers \cite{Fujita: 2014, X.Li: 2008}. In particular, the existence of rainbow triangles attracts much attention during the past decades. For an edge-colored complete graph $K_n$, Gallai \cite{Gallai: 1967} characterized the coloring structure of $K_n$ containing no rainbow triangles. Gy\'{a}rf\'{a}s and Simonyi \cite{Gyarfas: 2004} showed that each edge-colored $K_n$ with $\Delta^{mon}(K_n)<\frac{2n}{5}$ contains a rainbow triangle and this bound is tight. Fujita et al. \cite{FLZ: 2017} proved that each edge-colored $K_n$ with $\delta^c(K_{n})> \log_{2}n$ contains a rainbow triangle and this bound is tight. For a general edge-colored graph $G$ of order $n$, Li and Wang \cite{Li-Wang: 2012} proved that if $\delta^{c}(G)\geq\frac{\sqrt{7}+1}{6}n$, then $G$ contains a rainbow triangle. Li \cite{H.Li: 2013} and Li et al. \cite{B.Li: 2014} improved the condition to $\delta^{c}(G)>\frac{n}{2}$ independently, and showed that this bound is tight. Li et al. \cite{Li-Ning-Zhang: 2016} further proved that if $G$ is an edge-colored graph of order $n$ satisfying $d^c(u)+d^c(v)\geq n+1$ for every edge $uv\in E(G)$, then it contains a rainbow triangle. In \cite{Li-Zhang-Bai-Li: 2018}, Li et al. gave some maximum monochromatic degree conditions for an arc-colored strongly connected tournament  $T_{n}$ to contain rainbow triangles, and to contain rainbow triangles passing through a given vertex. For more results on rainbow cycles, see \cite{Albert: 1995, Erdos: 1983, Frieze: 1993, Hahn: 1986}.

In this paper, we mainly study the existence of rainbow triangles in arc-colored digraphs. Let $D$ be an arc-colored digraph on $n$ vertices. Sridharan \cite{Sridharan} proved that the maximum number of arcs
among all digraphs of order $n$ with no directed triangles is $\lfloor\frac{n^{2}}{2}\rfloor$. Thus $D$ contains a rainbow triangle if $c(D)\geq\lfloor\frac{n^{2}}{2}\rfloor+1$. This lower bound is sharp by considering the complete bipartite digraph $\overleftrightarrow{K}_{\lfloor\frac{n}{2}\rfloor, \lceil\frac{n}{2}\rceil}$ with arcs assigned pairwise distinct colors.

For an edge-colored graph $G$, we use $e(G)$ and $c(G)$ to denote the number of edges of $G$ and the number of colors assigned to the edges of $G$, respectively. Let $f(G)$ be the smallest integer such that if $c(G)\geq f(G)$, then $G$ contains a rainbow triangle. In \cite{Gyarfas: 2004}, the authors proved that $f(K_{n})=n$. Li et al. \cite{B.Li: 2014} proved that if $e(G)+c(G)\geq \frac{n(n+1)}{2}$, then $G$ contains a rainbow triangle. Note that $\frac{n(n+1)}{2}=\frac{n(n-1)}{2}+n=e(K_{n})+f(K_{n})$. Motivated by this result, we wonder whether an arc-colored digraph $D$ on $n$ vertices contains a rainbow triangle when
$$
a(D)+c(D)\geq a(\overleftrightarrow{K}_{n})+f(\overleftrightarrow{K}_{n}).
$$

First we calculate $f(\overleftrightarrow{K}_{n})$ for $n\geq 3$.

\begin{thm}\label{thm1}
Let $\overleftrightarrow{K}_{n}$ be an arc-colored complete digraph of order $n\geq 3$ and $f(\overleftrightarrow{K}_{n})$ be the smallest integer such that $\overleftrightarrow{K}_{n}$ with $c(\overleftrightarrow{K}_{n})\geq f(\overleftrightarrow{K}_{n})$ contains a rainbow triangle. Then
\[f(\overleftrightarrow{K}_{n})=\begin{cases}
\lfloor\frac{n^{2}}{4}\rfloor+3,&\text{$n=3, 4$};\\
\lfloor\frac{n^{2}}{4}\rfloor+2,&
\text{$n\geq 5$}.
\end{cases}\]
\end{thm}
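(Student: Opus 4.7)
The plan is to establish both directions of the bound for $f(\overleftrightarrow{K}_n)$.

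\textbf{Lower bound.} For $n \geq 5$, I would construct an arc-coloring of $\overleftrightarrow{K}_n$ with $\lfloor n^2/4 \rfloor + 1$ colors containing no rainbow triangle. The natural candidate uses the bipartition $V = A \cup B$ with $|A|=\lfloor n/2\rfloor$ and $|B|=\lceil n/2\rceil$: assign to each unordered bipartite pair $\{a,b\}$ a distinct ``pair color'' applied to both arcs $ab$ and $ba$ (yielding $\lfloor n^2/4\rfloor$ colors), and then extend the coloring to the arcs inside $A$ and inside $B$ by reusing pair colors together with at most one additional color, chosen so that every mixed triangle (two vertices in one part and one in the other) has two arcs of the same color. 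For $n=3,4$, where the formula takes the value $\lfloor n^2/4\rfloor+3$, the extremal colorings are constructed ad hoc on $\overleftrightarrow{K}_3$ and $\overleftrightarrow{K}_4$.

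\textbf{Upper bound.} I would argue by induction on $n$, verifying the base cases $n \in \{3,4,5\}$ directly and assuming the statement for $\overleftrightarrow{K}_{n-1}$. Suppose $\overleftrightarrow{K}_n$ is arc-colored with $k \geq f(\overleftrightarrow{K}_n)$ colors and no rainbow triangle. Since $f(\overleftrightarrow{K}_n) - f(\overleftrightarrow{K}_{n-1}) = \lfloor n/2\rfloor$ for $n \geq 6$, it suffices to find a vertex $v$ such that at most $\lfloor n/2\rfloor$ colors are exclusive to $v$ --- that is, have all their arcs incident to $v$. Deleting such $v$ leaves a rainbow-triangle-free coloring of $\overleftrightarrow{K}_{n-1}$ with at least $k - \lfloor n/2\rfloor \geq f(\overleftrightarrow{K}_{n-1})$ colors, contradicting the induction hypothesis.

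\textbf{Main obstacle.} The hard part will be producing such a vertex $v$. A crude counting gives $\sum_v e(v) \leq 2k$, where $e(v)$ denotes the number of colors exclusive to $v$ (since any color exclusive to two distinct vertices must have all arcs between a common pair, capping its contribution at $2$), so on average $e(v) \leq 2k/n$. For $k = \lfloor n^2/4\rfloor + 2$ this yields roughly $e(v) \leq n/2 + O(1/n)$, which only marginally meets the needed bound and can fail at equality when $n$ is odd. Closing this gap requires combining the averaging argument with structural consequences of the no-rainbow-triangle hypothesis --- for instance, showing that a color whose arcs form a monochromatic ``star'' centered at $v$ forces restrictions on colors at the neighbors of $v$ --- or treating the tight configurations case by case. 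The naive representative-subdigraph approach combined with Sridharan's theorem only yields $k \leq \lfloor n^2/2\rfloor$, so a more refined structural analysis is essential.
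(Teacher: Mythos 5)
Your overall architecture (explicit construction for the lower bound; induction on $n$ via a vertex whose ``exclusive'' colors number at most $\lfloor n/2\rfloor$, which is exactly the paper's quantity $d^{s}(v)$) matches the paper's, but the proposal has two genuine gaps. First, the lower-bound construction does not work as described. If every unordered bipartite pair $\{a,b\}$ receives its own color on \emph{both} arcs $ab$ and $ba$, then for $a,a'\in A$ and any two vertices $b_1,b_2\in B$ the triangles $ab_1a'a$ and $ab_2a'a$ force $C(a'a)\in\{C(ab_1),C(a'b_1)\}\cap\{C(ab_2),C(a'b_2)\}=\emptyset$; no choice of colors inside $A$ can repair this once $|B|\geq 2$. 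The correct extremal coloring is asymmetric: give pairwise distinct colors only to the $\lfloor n^{2}/4\rfloor$ arcs oriented from one part to the other, and give every remaining arc (including all reverse bipartite arcs) a single new color; then every directed triangle uses at most one of the rainbow arcs.

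Second, and more seriously, the step you label the ``main obstacle'' is the heart of the proof and is left unproven. The averaging bound $\sum_{v}d^{s}(v)\leq 2c(D)$ combined with $d^{s}(v)\geq c(D)-f(\overleftrightarrow{K}_{n-1})+1$ does close the induction for even $n\geq 6$ and for odd $n\geq 9$, but it is only tight (not contradictory) at $n=7$, and it says nothing useful at the base case $n=5$, which in the paper requires a full structural characterization of the extremal $6$-colored $\overleftrightarrow{K}_4$ followed by case analysis. The paper avoids averaging entirely: it first shows (its Lemma 1) that in a rainbow-triangle-free complete digraph, a vertex $v$ with $d^{s}(v)\geq 3$ has all its saturated colors on out-arcs only or on in-arcs only, since a saturated in-color and a distinct saturated out-color at $v$ immediately produce a rainbow triangle (the closing arc is not incident to $v$). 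Then (its Lemma 2), assuming every vertex satisfies $d^{s}(v)\geq\lfloor n/2\rfloor+1$, it takes $v$ with saturated out-arcs $vw_{1},\ldots,vw_{k}$ and shows that no color on an arc between $w_{1}$ and $\{w_{2},\ldots,w_{k}\}$ can be saturated by $w_{1}$ (each such color is forced to equal a color on an arc at $v$), so the $\geq\lfloor n/2\rfloor+1$ colors saturated by $w_{1}$ must live on arcs to the remaining $n-k$ vertices, at most one per vertex, giving $n\geq k+\lfloor n/2\rfloor+1\geq n+1$. This structural use of the no-rainbow-triangle hypothesis is exactly the missing ingredient; without it (or a completed tight-case analysis at $n=5$ and $n=7$), your proof does not go through.
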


We also investigate the structure of the arc-colored complete digraphs $\overleftrightarrow{K}_{n}$ with $c(\overleftrightarrow{K}_{n})=f(\overleftrightarrow{K}_{n})-1$ and containing no rainbow triangles.

\begin{thm}\label{thm1+}
Let $\mathcal{G}_n$ be the class of arc-colored complete digraphs of order $n$ such that for each $D\in \mathcal{G}_n$, $c(D)=f(D)-1$ and $D$ contains no rainbow triangles. Then each $D$ in $\mathcal{G}_3$ can be decomposed into two arc-disjoint $2$-arc-colored triangles $\Delta_{1}$ and $\Delta_{2}$ such that $C(\Delta_1)\bigcap C(\Delta_2)=\emptyset$. For each $D$ in $\mathcal{G}_4$, there exists a permutation of the vertex set of $D$, say $v_{1}v_{2}v_{3}v_{4}$, such that
\begin{equation*}
\left\{
\begin{aligned}
C(v_1v_2)&=C(v_2v_3)=C(v_3v_4)=C(v_4v_1)=a,\\
C(v_1v_4)&=C(v_4v_3)=C(v_3v_2)=C(v_2v_1)=b,\\
C(v_1v_3)&=c,\quad C(v_3v_1)=d,\\
C(v_2v_4)&=e,\quad C(v_4v_2)=f,
\end{aligned}
\right.
\end{equation*}
where $a, b, c, d, e, f$ are pairwise distinct colors.

Each $D$ in $\mathcal{G}_5$ belongs to one of the following three types of digraphs:
\begin{itemize}
  \item Type I: There is a vertex $v\in V(D)$ such that all arcs incident to $v$ are colored by a same color $c$, $D-v\in\mathcal{G}_4$ and $c\notin C(D-v)$;
  \item Type II: The vertex set of $D$ can be partitioned into two subsets $\{a_1,a_2\}$ and $\{b_1,b_2,b_3\}$ such that the spanning subdigraph $H$ of $D$ with $A(H)=\{a_ib_j|i=1,2;j=1,2,3\}$ (or $A(H)=\{b_ja_i|i=1,2;j=1,2,3\}$) is rainbow and all arcs in $A(D)\setminus A(H)$ are colored by a same new color;
  \item Type III: The vertex set of $D$ can be partitioned into two subsets $\{a_1,a_2\}$ and $\{b_1,b_2,b_3\}$ such that $C(D[\{a_1,a_2\}])=\{a,b\}$, $D[\{b_1,b_2,b_3\}]\in\mathcal{G}_3$, $C(D[\{b_1,b_2,b_3\}])=\{c,d,e,f\}$ and all arcs between $\{a_1,a_2\}$ and $\{b_1,b_2,b_3\}$ are colored by $g$, where $a,b,c,d,e,f,g$ are pairwise distinct colors.
\end{itemize}
For each $D\in\mathcal{G}_n$, $n\geq 6$, the vertex set of $D$ can be partitioned into two subsets $\{a_1,a_2,\ldots,\\
a_{\lfloor\frac{n}{2}\rfloor}\}$ and $\{b_1,b_2,\ldots,b_{\lceil\frac{n}{2}\rceil}\}$ such that the spanning subdigraph $H$ of $D$ with
$$A(H)=\{a_ib_j|i=1,2,\ldots,\lfloor\frac{n}{2}\rfloor;j=1,2,\ldots,\lceil\frac{n}{2}\rceil\}$$ or
$$A(H)=\{b_ja_i|i=1,2,\ldots,\lfloor\frac{n}{2}\rfloor;j=1,2,\ldots,\lceil\frac{n}{2}\rceil\}$$
 is rainbow and all arcs in $A(D)\setminus A(H)$ are colored by a same new color.
\end{thm}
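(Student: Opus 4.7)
The plan is to verify sufficiency (each listed structure attains $c(D)=f(D)-1$ and contains no rainbow triangle) and necessity (no other structures do) separately. Sufficiency reduces to a color count plus a triangle check. For the bipartite constructions, namely Type II at $n=5$ and the form displayed for $n\geq 6$, a directed triangle with all three vertices in one part has its three arcs outside $H$ and so all in the common new color, while a triangle split $2$-$1$ across the partition uses exactly one arc in $H$ and two arcs outside $H$ sharing the new color; either way the triangle is not rainbow. Type I reduces to applying $\mathcal{G}_4$ to $D-v$ and noting that every triangle through $v$ contains two arcs of color $c$. Type III separates triangles inside $\{b_1,b_2,b_3\}$ (handled by $\mathcal{G}_3$) from crossing triangles (which carry two arcs of color $g$).

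For necessity, $n=3$ and $n=4$ are finite case analyses. For $n=3$ the arc set of $\overleftrightarrow{K}_3$ partitions into its two directed triangles $\Delta_1,\Delta_2$; neither is rainbow so each uses at most two colors, and reaching $c(D)=4$ forces each to use exactly two colors on disjoint palettes. For $n=4$, each induced $3$-vertex subdigraph is rainbow-triangle-free, and combining this with the tight budget $a(D)=12$, $c(D)=6$ pins down two monochromatic opposite $4$-cycles in distinct colors together with four diagonals in four pairwise distinct new colors. For $n=5$ I remove a vertex $v$ and study $D-v$: if $D-v\in\mathcal{G}_4$, the color profile of arcs incident to $v$ yields Type I or Type III according to whether a single color or a richer palette appears on them; otherwise $D-v$ is bipartite-leaning on four vertices and extends uniquely to Type II.

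For $n\geq 6$ I induct on $n$, with $n=6$ as the base case handled directly. Given $D\in\mathcal{G}_n$ and a vertex $v$, let $k_v$ be the number of colors appearing only on arcs incident to $v$. Since $D-v$ is rainbow-triangle-free, $c(D-v)=c(D)-k_v\leq f(\overleftrightarrow{K}_{n-1})-1$, which after substituting the closed form of $f$ yields $k_v\geq\lfloor n/2\rfloor$. The arithmetic identity $\lfloor n^2/4\rfloor-\lfloor n/2\rfloor=\lfloor(n-1)^2/4\rfloor$ shows that a vertex $v$ with $k_v=\lfloor n/2\rfloor$ gives $D-v\in\mathcal{G}_{n-1}$, on which the inductive hypothesis supplies a bipartition $A'\cup B'$ with rainbow subdigraph $H'$ and common new color $c_0$. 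The rainbow-triangle-free condition applied to pairs of arcs of $H'$ together with the arcs at $v$ then forces $v$ to attach consistently to $A'$ or $B'$ and colors all remaining arcs at $v$ by $c_0$, reproducing the claimed form on $D$.

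The main obstacle will be twofold. First, producing a vertex $v$ with $k_v=\lfloor n/2\rfloor$ for the inductive step: the naive pigeonhole $\sum_v k_v\leq 2c(D)$ is not by itself strong enough at $n=6$, and one must sharpen it using the fact that colors with large class size cannot be private to two vertices. Second, the base case $n=6$ cannot invoke induction and must eliminate Type I and Type III analogues on six vertices directly, by showing that any such hybrid construction---an $\mathcal{G}_5$-core plus monochromatic overlay, or a $\mathcal{G}_3$-core plus monochromatic bipartite glue---cannot reach the extremal count $\lfloor 36/4\rfloor+1=10$. The detailed local analysis of arcs at $v$ against pairs of arcs of $H'$, which rules out mixed attachments producing rainbow triangles, is what drives the inductive step through.
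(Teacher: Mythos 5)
Your outline for $n\geq 7$ matches the paper's induction (a vertex $v$ with $d^{s}(v)=\lfloor n/2\rfloor$ exists because $c(D-v)\leq f(\overleftrightarrow{K}_{n-1})-1$ forces $d^{s}(v)\geq\lfloor n/2\rfloor$ while an upper bound in the other direction gives equality, so $D-v\in\mathcal{G}_{n-1}$ and the arcs at $v$ are pinned down by local triangle checks). One small remark there: the bound you worry about is not actually a problem --- the paper proves a structural lemma (its Lemma~2) giving a vertex with $d^{s}(v)\leq\lfloor n/2\rfloor$ for every rainbow-triangle-free $\overleftrightarrow{K}_n$, and in fact even the naive count $\sum_v d^{s}(v)\leq 2c(D)=2\lfloor n^2/4\rfloor+2$ already yields such a vertex for $n\geq 6$. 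The sufficiency checks you propose are harmless but not required, since the theorem only asserts necessity.

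The genuine gap is the case $n=5$, which is the bulk of the paper's proof and which your roadmap gets structurally wrong. Your split is: if $D-v\in\mathcal{G}_4$ you get Type I or Type III ``according to whether a single color or a richer palette'' appears at $v$; otherwise you get Type II. But $D-v\in\mathcal{G}_4$ means $d^{s}(v)=1$, and in that case the paper shows all arcs at $v$ are forced to carry the single saturated color, i.e.\ only Type I arises; a ``richer palette'' there is impossible, not a route to Type III. Conversely, in a Type III digraph no vertex deletion leaves a member of $\mathcal{G}_4$ (deleting any vertex leaves only $5$ of the $6$ needed colors), so Type III lives entirely in your ``otherwise'' branch, which therefore cannot reduce to Type II alone, and $D-v$ there is not ``bipartite-leaning'' in general. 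What is actually needed in that branch (all $d^{s}(v)\geq 2$) is the paper's classification of colors into those saturated by two vertices ($X$), by one vertex ($Y$), or by none ($Z$), the inequality $x\geq z+3$, and a case analysis on the underlying graph of the spanning subdigraph carrying the $X$-colors; this is where Types II and III are separated and where the hybrid configurations are excluded. The same omission propagates to your base case $n=6$, where one must explicitly rule out the Type I and Type III cores of $D-v$; you name this obstacle but supply no argument. As written, the proposal does not contain the ideas needed to close these cases.
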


Furthermore, we study the "$a(D)+c(D)$" condition for the existence of rainbow triangles
 in arc-colored digraphs (not necessarily complete).

\begin{thm}\label{thm2}
Let $D$ be an arc-colored digraph on $n$ vertices. If
\[a(D)+c(D)\geq\begin{cases}
n(n-1)+\lfloor\frac{n^{2}}{4}\rfloor+3,&\text{$n=3, 4$};\\
n(n-1)+\lfloor\frac{n^{2}}{4}\rfloor+2,&
\text{$n\geq 5$ },
\end{cases}\]
then $D$ contains a rainbow triangle.
\end{thm}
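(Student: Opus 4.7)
I argue by contradiction. Suppose $D$ is an arc-colored digraph on $n$ vertices satisfying the hypothesis of Theorem~\ref{thm2} but containing no rainbow triangle. The strategy is to extend the arc-coloring of $D$ to the complete digraph $\overleftrightarrow{K}_n$ so that no rainbow triangle is created, and then apply Theorem~\ref{thm1} to count colors. If this extension succeeds while introducing $t \geq 0$ new colors, then the extended $\overleftrightarrow{K}_n$ has $c(D) + t$ colors and no rainbow triangle, so Theorem~\ref{thm1} yields $c(D) + t \leq f(\overleftrightarrow{K}_n) - 1$. Combined with $a(D) \leq a(\overleftrightarrow{K}_n) = n(n-1)$ and $t \geq 0$, this gives $a(D) + c(D) \leq n(n-1) + f(\overleftrightarrow{K}_n) - 1$, contradicting the hypothesis.

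I build the extension iteratively. Fix an arbitrary ordering of the missing arcs of $D$; for each missing arc $uv$ in turn, assign it a color that does not create a rainbow triangle with the arcs already colored (either in the original $D$ or placed during a previous step). A color $c$ is admissible for $uv$ precisely when, for every vertex $w$ such that the 2-path $v \to w \to u$ is already in place with $C(vw) \neq C(wu)$, one has $c \in \{C(vw), C(wu)\}$; if $C(vw) = C(wu)$ the triangle $uvw$ is not rainbow for any choice of $c$. The central task is to show that at every step some admissible color — either from $C(D)$ or a fresh new color — exists, so that the procedure never terminates prematurely.

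The main obstacle is ruling out the following bad configuration at some missing arc $uv$: two vertices $w_1, w_2$ such that the four arcs $vw_1, w_1u, vw_2, w_2u$ are already colored with four pairwise distinct colors, for then no color (old or new) works at $uv$. To rule this out, I would exploit the no-rainbow-triangle assumption on $D$ applied to the directed triangles among $\{u, v, w_1, w_2\}$ — such as $v \to w_1 \to w_2 \to v$, $u \to w_2 \to w_1 \to u$, and their reverses — together with triangles that involve a fifth vertex. Each such triangle present in the current partial extension forces color coincidences which, propagated along the obstruction, shrink the number of distinct colors allowed on the offending four arcs. The density implied by $a(D) + c(D) \geq n(n-1) + f(\overleftrightarrow{K}_n)$ ensures that sufficiently many of these triangles are actually present, so the constraints bite and the obstruction is forbidden. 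The small cases $n \in \{3, 4, 5\}$ may be handled directly using the explicit classification in Theorem~\ref{thm1+}.
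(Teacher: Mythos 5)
Your proposal has a genuine gap: the entire proof rests on the claim that the coloring of $D$ can be extended to $\overleftrightarrow{K}_n$ without creating a rainbow triangle, and this claim is never established --- it is only accompanied by a plan ("the density \ldots ensures that sufficiently many of these triangles are actually present, so the constraints bite"). This is precisely the hard part, and there is concrete evidence that it cannot be handled by a local argument. The rainbow-colored complete bipartite digraph $\overleftrightarrow{K}_{\lfloor n/2\rfloor,\lceil n/2\rceil}$ has no rainbow triangle but $2\lfloor n^2/4\rfloor > f(\overleftrightarrow{K}_n)-1$ colors, so by Theorem~\ref{thm1} it admits \emph{no} rainbow-triangle-free completion whatsoever; hence any successful extension argument must use the hypothesis $a(D)+c(D)\geq n(n-1)+f(\overleftrightarrow{K}_n)$ in an essential, quantitative way, and your sketch gives no mechanism for this. (Note also that under the hypothesis $c(D)$ can be as large as $\lfloor n^2/2\rfloor$, so roughly $n^2/4$ arcs may be missing; the neighborhood of a given missing arc can therefore be quite sparse, and the triangles you want to invoke need not exist.) Moreover, your description of the obstruction is incomplete: with three or more vertices $w_1,w_2,w_3$ forming directed paths $v\to w_i\to u$, the admissible set for $uv$ is $\bigcap_i\{C(vw_i),C(w_iu)\}$, which can be empty even when no two of the pair-sets are disjoint (e.g.\ $\{1,2\},\{2,3\},\{1,3\}$), so ruling out your two-vertex configuration does not suffice. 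Finally, the iterative scheme introduces an ordering dependence (earlier choices constrain later ones) that is not addressed.

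For comparison, the paper avoids extension entirely: it takes a counterexample minimal first in vertices and then in arcs, and runs a counting argument on the quantities $d(v)+d^{s}(v)$ (degree plus number of colors saturated by $v$). Minimality forces $d(v)+d^{s}(v)$ to be large for every vertex, while the bound $\sum_{v}d^{s}(v)\leq 2c(D)-1$ caps the total; summing yields a contradiction for all $n\geq 6$, and the cases $n=3,4,5$ are settled by separate ad hoc analyses (Lemmas~\ref{lem3} and~\ref{lem4} plus a color-multiplicity count). If you want to salvage your approach, you would need to isolate and prove the extension step as a standalone lemma under the stated density hypothesis; as written, the argument does not go through.
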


\begin{remark}
By the definition of $f(\overleftrightarrow{K}_{n})$ and Theorem \ref{thm1}, we can see that the bound of $a(D)+c(D)$ in Theorem \ref{thm2} is sharp.
\end{remark}

Finally, we give a color number condition for the existence of rainbow triangles in strongly connected tournaments.

\begin{thm}\label{thm3}
Let $D$ be an arc-colored strongly connected tournament on $n$ vertices. If $c(D)\geq \frac{n(n-1)}{2}-n+3$, then $D$ contains a rainbow triangle.
\end{thm}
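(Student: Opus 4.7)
The plan is induction on $n$. The base case $n = 3$ is immediate: a strong tournament on three vertices is itself a directed triangle, and the assumption $c(D) \geq 3$ forces the three arcs to carry pairwise distinct colors.

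For the inductive step I would take $n \geq 4$, invoke the classical fact that every strong tournament on at least four vertices has a vertex $v$ whose deletion leaves a strong tournament, and split the argument according to the number $k$ of colors of $D$ that appear only on arcs incident to $v$. Then $c(D - v) = c(D) - k$ and $k \in \{0, 1, \ldots, n-1\}$, since $v$ has exactly $n - 1$ incident arcs.

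If $k \leq n - 2$, a short calculation gives $c(D - v) \geq \binom{n}{2} - n + 3 - (n - 2) = \binom{n-1}{2} - (n - 1) + 3$, so the inductive hypothesis applied to the strong tournament $D - v$ furnishes a rainbow triangle inside $D - v$, which is also a rainbow triangle of $D$.

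The main obstacle is the boundary case $k = n - 1$, in which the bound on $c(D - v)$ falls exactly one short of the inductive threshold. I would handle it by exploiting the rigid coloring forced at $v$: $k = n - 1$ means that the $n - 1$ arcs at $v$ all carry pairwise distinct colors and that each such color is used only on its particular arc at $v$. Writing $A := N^-_D(v)$ and $B := N^+_D(v)$, both are nonempty by strong-connectedness. If every arc between $A$ and $B$ were oriented from $A$ to $B$, then combined with $A \to v \to B$ the set $B$ would have no arc leaving $B$, contradicting the strong-connectedness of $D$. Therefore there exist $a \in A$ and $b \in B$ with $b \to a$, producing a directed triangle $v \to b \to a \to v$. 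Its three colors are pairwise distinct: $C(vb) \ne C(av)$ since the arcs at $v$ are distinctly colored, while $C(ba)$ cannot equal $C(vb)$ or $C(av)$ because those two colors appear only on the arcs $vb$ and $av$ respectively. This gives the desired rainbow triangle and closes the induction.
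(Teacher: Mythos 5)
Your proposal is correct and follows essentially the same route as the paper: induction on $n$, deletion of a vertex $v$ whose removal leaves a strong tournament (the paper gets this from Moon's theorem via an $(n-1)$-cycle), reduction to the case where all $n-1$ colors at $v$ are saturated by $v$, and then using strong connectivity to find an arc from $N^+(v)$ to $N^-(v)$ that closes a rainbow triangle. The only cosmetic difference is that you split explicitly on $k=d^s(v)$ rather than assuming no rainbow triangle and deriving $d^s(v)\geq n-1$ from the induction hypothesis.
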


\begin{remark}
The bound of $c(D)$ in Theorem \ref{thm3} is sharp. Let $D$ be a digraph with vertex set $V=\{v_{1}, v_{2}, \ldots, v_{n}\}$ and arc set $A=\left(\{v_{i}v_{j}|1\leq i< j\leq n\}\setminus\{v_{1}v_{n}\}\right)\bigcup\{v_{n}v_{1}\}$. Then $D$ is a strongly connected tournament. Color all the arcs incident to $v_{1}$ by a same color and color the remaining arcs by pairwise distinct new colors. Then $c(D)=\frac{n(n-1)}{2}-(n-1)+1=\frac{n(n-1)}{2}-n+2$. But there is no rainbow triangle in $D$.
\end{remark}

\section{Proofs of the theorems}

Let $v$ be a vertex in $D$, and $c$ a color in $C(D)$. If all the arcs with color $c$ are incident to $v$, then we
call $c$ a color {\it saturated} by $v$. We use $C^{s}(v)$ to denote the set of colors saturated by $v$ and define $d^{s}(v)=|C^{s}(v)|$. If a color in $C(D)$ is not saturated by $v$, then it is also a color
in $C(D-v)$. This implies that $c(D-v) = c(D)-d^{s}(v)$.

\begin{obs}
Let $D$ be an arc-colored complete digraph. For a vertex $v\in D$, if there are two vertices $u\neq w$ such that $C(uv)\neq C(vw)$ and $C(uv), C(vw)\in C^{s}(v)$, then $uvwu$ is a rainbow triangle.
\end{obs}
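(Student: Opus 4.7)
The plan is to verify that the three arcs $uv$, $vw$, $wu$ of the directed closed walk $uvwu$ carry pairwise distinct colors, which, once we know $uvwu$ is actually a triangle, is exactly what ``rainbow triangle'' requires.

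First I would confirm that $u$, $v$, $w$ are three distinct vertices spanning a directed triangle. Since $D$ is loop-free, $u\neq v$ and $w\neq v$; the hypothesis supplies $u\neq w$. Completeness of $D$ then guarantees that the arcs $uv$, $vw$, and $wu$ all exist, so $uvwu$ is a directed triangle.

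Next I would show the three arc-colors are pairwise distinct. The inequality $C(uv)\neq C(vw)$ is given. To compare $C(wu)$ with the other two, I would invoke the defining property of a color saturated by $v$: every arc of such a color must be incident to $v$. Because $u\neq v$ and $w\neq v$, the arc $wu$ is not incident to $v$, hence cannot carry any color in $C^{s}(v)$. In particular $C(wu)\notin\{C(uv),C(vw)\}$, which together with $C(uv)\neq C(vw)$ gives three distinct colors on the triangle.

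The only thing to ``notice'' is that in the triangle $uvwu$ exactly one arc, namely $wu$, avoids the vertex $v$; once this structural remark is made, the saturation hypothesis finishes the argument immediately, so I do not expect a genuine obstacle here.
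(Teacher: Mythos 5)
Your proposal is correct and follows essentially the same argument as the paper: since the arc $wu$ is not incident to $v$, its color cannot lie in $C^{s}(v)$, so the three arc-colors are pairwise distinct. The extra care you take in checking that $u$, $v$, $w$ are distinct and that the triangle exists by completeness is fine but not a substantive difference.
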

\begin{proof}
Since the arc $wu$ is not incident to $v$, we have $C(wu)\notin C^{s}(v)$. Namely, $C(uv)$, $C(vw)$ and $C(wu)$ are pairwise distinct colors. Thus, $uvwu$ is a rainbow triangle.
\end{proof}
Before presenting the proof of Theorem \ref{thm1}, we first prove the following lemmas.

\begin{lem}\label{lem1}
Let $D$ be an arc-colored digraph of order $n\geq 4$ without rainbow triangles. For a vertex $v\in D$, if $D-v\cong \overleftrightarrow{K}_{n-1}$ and $d^{s}(v)\geq 3$, then $CN^{-}(v)\bigcap C^{s}(v)=\emptyset$ or $CN^{+}(v)\bigcap C^{s}(v)=\emptyset$. Moreover, if $D\cong \overleftrightarrow{K}_4$, then $c(D)\leq 5$.
\end{lem}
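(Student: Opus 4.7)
I plan to handle the two assertions in turn, using the Observation as the main tool.

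For the first assertion, I argue by contradiction: suppose both $CN^-(v)\cap C^s(v)$ and $CN^+(v)\cap C^s(v)$ are nonempty, with $\alpha$ chosen from the former and $\beta$ from the latter, witnessed by an in-arc $uv$ of color $\alpha$ and an out-arc $vw$ of color $\beta$. In the generic case $\alpha\neq\beta$ and $u\neq w$, the arc $wu$ lies in $D-v$ (which is complete) and therefore cannot carry either saturated color $\alpha$ or $\beta$, so the Observation produces a rainbow triangle $uvwu$, a contradiction. Ruling out the degenerate situations $\alpha=\beta$ or ``every valid choice forces $u=w$'' is where the hypothesis $d^s(v)\geq 3$ enters: a third saturated color $\gamma\in C^s(v)\setminus\{\alpha,\beta\}$ yields a new in- or out-arc of $v$ whose other endpoint must differ from $u=w$ (otherwise the single arc $vu$ or $uv$ would be forced to bear two distinct colors), and the Observation applies to this modified pair.

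For the ``moreover'' part, let $V(D)=\{v,x,y,z\}$. By the first assertion I may assume $CN^+(v)\cap C^s(v)=\emptyset$, so every color in $C^s(v)$ lies on one of the three in-arcs $xv,yv,zv$; hence $d^s(v)=3$ and these three arcs carry three pairwise distinct saturated colors. Next, for each ordered pair $(a,b)$ of distinct vertices in $\{x,y,z\}$, I examine the triangle $v\to a\to b\to v$. Its arc $bv$ carries a saturated color while $C(va)$ is not saturated (by our reduction) and $C(ab)$ is not saturated (since it is an arc not incident to $v$), so neither of the latter equals $C(bv)$; the non-rainbow condition therefore forces $C(va)=C(ab)$. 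Enumerating the six ordered pairs yields the rigid identities
\[
C(vx)=C(xy)=C(xz),\quad C(vy)=C(yx)=C(yz),\quad C(vz)=C(zx)=C(zy),
\]
so every color appearing in $D$ belongs to $\{C(xv),C(yv),C(zv)\}\cup\{C(vx),C(vy),C(vz)\}$, giving $c(D)=3+|\{C(vx),C(vy),C(vz)\}|$. Finally, the triangle $x\to y\to z\to x$ inside $D-v$ has exactly these three colors and cannot be rainbow, so the set has cardinality at most $2$, yielding $c(D)\leq 5$.

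The main obstacle I foresee is the degenerate-witness bookkeeping in the first assertion, where the Observation does not apply directly and one must leverage $d^s(v)\geq 3$ via a third saturated color to prevent the two witnessing endpoints from collapsing. Once the first assertion is established, the ``moreover'' part follows from a rather rigid structural determination, and the $c(D)\leq 5$ bound is immediate from the single non-rainbow constraint inside $D-v$.
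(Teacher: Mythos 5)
Your proposal is correct and follows essentially the same route as the paper: the first assertion via the Observation together with a third saturated color to break the degenerate witness configurations, and the ``moreover'' part via the rigid identities $C(va)=C(ab)$ followed by the non-rainbow triangle inside $D-v$. The paper organizes the first assertion slightly differently --- it fixes one saturated out-color $1$, shows $CN^{-}(v)\bigcap\left(C^{s}(v)\setminus\{1\}\right)=\emptyset$, and then repeats the argument with a second saturated color $2$ to eliminate $1$ as well --- which streamlines the degenerate-case bookkeeping you flag, but the underlying mechanism is identical.
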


\begin{proof}
Let $C^{s}(v)=\{1,2, \ldots, k\}$, $k\geq 3$. If $CN^{+}(v)\bigcap C^{s}(v)\neq\emptyset$, without loss of generality, assume that $C(vw)=1$. We will show that $CN^{-}(v)\bigcap C^{s}(v)=\emptyset$. By contradiction, assume that there is a vertex $u\neq w$ such that $C(uv)\in C^{s}(v)\setminus \{1\}$, then $uvwu$ is a rainbow triangle, a contradiction. Suppose that $C(wv)\in C^{s}(v)\setminus \{1\}$, such as $C(wv)=2$. Since $d^{s}(v)\geq 3$, there is an arc colored by $3$ incident to $v$. By the above argument, this arc must be an out-arc of $v$, so we can assume that $C(vu)=3$. But now $wvuw$ is a rainbow triangle, a contradiction. Thus $CN^{-}(v)\bigcap \left(C^{s}(v)\setminus \{1\}\right)=\emptyset$. Namely, $2\in CN^{+}(v)$, by similar analysis we have $CN^{-}(v)\bigcap \left(C^{s}(v)\setminus \{2\}\right)=\emptyset$. Hence, we have $CN^{-}(v)\bigcap C^{s}(v)=\emptyset$.

If $n=4$, then assume that $V(D)=\{v, x, y, z\}$, $\{1, 2, 3\}\subseteq C^{s}(v)$, $C(vx)=1$, $C(vy)=2$, $C(vz)=3$, $D[\{x,y,z\}]$ is a $\overleftrightarrow{K}_3$, $C(xv)=a$, $C(yv)=b$ and $C(zv)=c$. Since $D$ contains no rainbow triangles, we have $C(yx)=C(zx)=a$, $C(xy)=C(zy)=b$ and $C(xz)=C(yz)=c$. So $C(D)=\{1, 2, 3\} \cup \{a\}\cup\{b\}\cup\{c\}$. If $a$, $b$, $c$ are pairwise distinct, then $xyzx$ is a rainbow triangle, a contradiction. So two of $a$, $b$ and $c$ must be a same color. Then $c(D)\leq 5$.
\end{proof}

\begin{lem}\label{lem2}
Let $D$ be an arc-colored complete digraph of order $n\geq 4$. If $D$ contains no rainbow triangles, then there must be a vertex $v \in V(D)$ such that $d^{s}(v)\leq \lfloor\frac{n}{2}\rfloor$.
\end{lem}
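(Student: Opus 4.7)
The plan is to prove the lemma by induction on $n$, arguing by contradiction under the assumption that every vertex $v\in V(D)$ satisfies $d^{s}(v)\geq \lfloor n/2\rfloor+1\geq 3$; this makes Lemma~\ref{lem1} applicable at every vertex. A handy double count that I would use throughout is
$\sum_{v}d^{s}(v)=\sum_{c\in C(D)}|\{v: v\text{ saturates }c\}|\leq 2c(D)$,
since every color is saturated by at most two vertices.

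For the base case $n=4$, Lemma~\ref{lem1} gives $c(D)\leq 5$, while the assumption yields $\sum_{v}d^{s}(v)\geq 12$; the double count then forces $c(D)\geq 6$, a contradiction. For the inductive step I would pick an arbitrary $u\in V(D)$, apply the inductive hypothesis to $D-u\cong \overleftrightarrow{K}_{n-1}$ to obtain $v\in V(D-u)$ with $d^{s}_{D-u}(v)\leq \lfloor(n-1)/2\rfloor$, and compare the two saturation counts at $v$. Any color in $C^{s}_{D}(v)\setminus C^{s}_{D-u}(v)$ must be used only on an arc of $\{uv,vu\}$, giving an a priori gap of at most $2$. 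Lemma~\ref{lem1} at $v$ in $D$ sharpens this to $1$: if both $C(uv)$ and $C(vu)$ lay in $C^{s}_{D}(v)$, one would contribute to $CN^{-}(v)\cap C^{s}(v)$ and the other to $CN^{+}(v)\cap C^{s}(v)$, contradicting Lemma~\ref{lem1}. Hence $d^{s}_{D}(v)\leq \lfloor(n-1)/2\rfloor+1$.

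For even $n$ this equals $\lfloor n/2\rfloor$, contradicting $d^{s}_{D}(v)\geq \lfloor n/2\rfloor+1$ and closing the step. The hard part will be the odd case, where $\lfloor(n-1)/2\rfloor+1=\lfloor n/2\rfloor+1$ only matches the assumed lower bound and the simple induction stalls. To break the tie I would classify each vertex via Lemma~\ref{lem1} as type $+$ (all saturated colors on out-arcs) or type $-$, and use the out-arc budget at a type-$+$ vertex $v$, namely that the arcs of its $d^{s}(v)$ saturated colors fit inside the $n-1$ out-arcs, to show that $v$ has at least $2d^{s}(v)-(n-1)\geq 2$ singleton-colored saturated out-arcs. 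A second application of Lemma~\ref{lem1} at the other endpoint of such an arc forces that endpoint to be type $-$, so all these singleton arcs run from type $+$ to type $-$. Summing over $P$ (the type-$+$ set) and $N$ (the type-$-$ set) yields a constraint of the form
$|P|\cdot|N|\geq \sum_{v\in P}\bigl(2d^{s}(v)-(n-1)\bigr)$
(and the symmetric one), which squeezes $|P|,|N|$ and the out-arc palette of each type-$+$ vertex into a very rigid pattern. Iterating these restrictions as in the case $n=5$, where $|P|=3$, $|N|=2$, and each $p\in P$ is forced to assign the same color to its two out-arcs inside $P$ with the three resulting colors pairwise distinct, produces a rainbow triangle inside $P$ and completes the contradiction.
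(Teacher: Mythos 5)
Your base case $n=4$, your even-$n$ inductive step, and your $n=5$ analysis are all sound (the observation that at most one of $C(uv),C(vu)$ can enter $C^{s}_{D}(v)$ via Lemma~\ref{lem1} is a nice touch). But the proof is not complete: for odd $n\geq 7$ you only assert that ``iterating these restrictions \ldots produces a rainbow triangle inside $P$,'' and the rigidity you exploit at $n=5$ does not persist. At $n=5$ the chain $2\leq 2d^{s}(p)-(n-1)\leq s_p\leq |N|=2$ pins down $s_p=2$, $m_p=1$, $d^{s}(p)=3$, and forces the unique doubled color of each $p\in P$ onto exactly its two out-arcs inside $P$, which is what creates the rainbow triangle. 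Already at $n=7$ with $|P|=4$, $|N|=3$ a vertex $p\in P$ has $d^{s}(p)\geq 4$, six out-arcs, and may realize $s_p=2$, $m_p=2$ or $s_p=3$, $m_p=1$, with the multi-arc saturated colors free to straddle $P$ and $N$ in several ways; nothing forces a prescribed color pattern on the triangles inside $P$. Your inequality $|P|\cdot|N|\geq\sum_{p\in P}\bigl(2d^{s}(p)-(n-1)\bigr)$ only yields $|N|\geq 2$ (and symmetrically $|P|\geq 2$), which is far from a contradiction. So the heart of the lemma --- the odd case --- is missing for all $n\geq 7$.

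The missing idea is that one can argue directly at a single vertex and avoid induction (and the even/odd split) altogether. This is what the paper does: assume $d^{s}(v)\geq\lfloor n/2\rfloor+1$ for all $v$, apply Lemma~\ref{lem1} to fix a vertex $v$ whose saturated colors all lie on out-arcs $vw_1,\dots,vw_k$ with $k=d^{s}(v)$ distinct heads, and then show that no arc between $w_1$ and $\{w_2,\dots,w_k\}$ can carry a color of $C^{s}(w_1)$: the absence of rainbow triangles forces $C(w_1w_j)=C(w_jv)$ and $C(w_jw_1)=C(w_1v)$, both of which put the color on an arc not incident to $w_1$ or on both an in- and an out-arc of $w_1$. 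Hence every saturated color of $w_1$ must be witnessed on an arc joining $w_1$ to one of the $n-k$ vertices outside $\{w_1,\dots,w_k\}$, at most one color per such vertex by Lemma~\ref{lem1}, giving $n-k\geq d^{s}(w_1)$ and the contradiction $n\geq 2(\lfloor n/2\rfloor+1)\geq n+1$. If you want to salvage your write-up, replacing the entire odd case by this local double count is the cleanest repair; your $P$/$N$ classification could perhaps be pushed through, but it would need a genuine argument for arbitrary odd $n$, not an appeal to the $n=5$ pattern.
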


\begin{proof}
Suppose for every vertex $v\in V(D)$, we have $d^{s}(v)\geq \lfloor\frac{n}{2}\rfloor+1$. Let $v$ be a vertex of $D$. Since $n\geq 4$, we have $d^{s}(v)\geq 3$. By Lemma \ref{lem1}, either $CN^{-}(v)\bigcap C^{s}(v)=\emptyset$ or $CN^{+}(v)\bigcap C^{s}(v)=\emptyset$. Without loss of generality, suppose $C^{s}(v)=\{1, 2, \ldots, k\}$, $k\geq \lfloor\frac{n}{2}\rfloor+1$ and $C(vw_{i})=i$, for $i=1, \ldots, k$. For $j\neq 1$, since $D$ contains no rainbow triangles, $C(w_{1}w_{j})\neq 1$ and $C(w_{j}v)\neq 1$, we have $C(w_{1}w_{j})=C(w_{j}v)$. Thus, $C(w_{1}w_{j}) \notin C^{s}(w_{1})$ for $j=2, \ldots, k$. Similarly, for $j\neq 1$, since $D$ contains no rainbow triangles, $C(w_{j}w_{1})\neq j$ and $C(w_{1}v)\neq j$, we have $C(w_{j}w_{1})=C(w_{1}v)$. Since $C(w_{j}w_{1})\in CN^{-}(w_{1})\bigcap CN^{+}(w_{1})$, we can see that $C(w_{j}w_{1}) \notin C^{s}(w_{1})$ for $j=2, \ldots, k$. So, all colors assigned to the arcs between $w_{1}$ and $\{w_{2}, \ldots, w_{k}\}$ do not belong to $C^{s}(w_{1})$. Note that for a pair of arcs $uw_{1}$ and $w_{1}u$, at most one of them has a color in $C^{s}(w_{1})$. So,
$$|V(D-\{w_{1}, \ldots, w_{k}\})|\geq d^{s}(w_{1})\geq \lfloor\frac{n}{2}\rfloor+1.$$ But now
$$|V(D)|=n\geq \lfloor\frac{n}{2}\rfloor+1+\lfloor\frac{n}{2}\rfloor+1\geq n+1,$$ a contradiction.
\end{proof}

Now we can give the proof of Theorem \ref{thm1}.

\noindent\textbf{Proof of Theorem \ref{thm1}.}
We divide the proof into four cases.
\begin{case}
$n=3$.
\end{case}
If $n=3$, then we have $a(D)=6$. If $c(D)\geq \lfloor\frac{n^{2}}{4}\rfloor+3=5$, then at most two arcs have a same color, other arcs all have pairwise distinct new colors. Since there are two arc-disjoint triangles in $D$, at least one of them is rainbow. Let $V(D)=\{u, v, w\}$, $C(uv)=C(vw)=1$, $C(wu)=2$, $C(vu)=C(uw)=3$ and $C(wv)=4$. Then $c(D)=4$ and neither of two triangles are rainbow. So we have $f(\overleftrightarrow{K}_{3})=\lfloor\frac{n^{2}}{4}\rfloor+3=5$.
\begin{case}
$n=4$.
\end{case}
For $n=4$, if $c(D)\geq \lfloor\frac{n^{2}}{4}\rfloor+3=7$ but $D$ contains no rainbow triangles, then for every vertex $v \in V(D)$, the complete digraph $D-v$ contains no rainbow triangles either. Since $f(\overleftrightarrow{K}_{3})=5$, we have $c(D-v)=c(D)-d^{s}(v)\leq 4$. So for every vertex $v \in V(D)$, we have $d^{s}(v)\geq 3$. By Lemma \ref{lem1}, we have $c(D)\leq 5$, a contradiction.

Let $V(D)=\{v, x, y, z\}$, $C(vx)=C(xy)=C(yz)=C(zv)=1$, $C(vz)=C(zy)=C(yx)=C(xv)=2$, $C(vy)=3$, $C(yv)=4$, $C(xz)=5$ and $C(zx)=6$. Then $c(D)=6$ and $D$ contains no rainbow triangles. So we have $f(\overleftrightarrow{K}_{4})=\lfloor\frac{n^{2}}{4}\rfloor+3=7$.

\setcounter{claim}{0}
\begin{claim}\label{claim1}
Let $D$ be an arc-colored $\overleftrightarrow{K}_{4}$ without rainbow triangles. If $c(D)=6$, then there must be a permutation of the vertex set of $D$, say $v_{1}v_{2}v_{3}v_{4}$, such that
\begin{equation*}
\left\{
\begin{aligned}
C(v_1v_2)&=C(v_2v_3)=C(v_3v_4)=C(v_4v_1)=a,\\
C(v_1v_4)&=C(v_4v_3)=C(v_3v_2)=C(v_2v_1)=b,\\
C(v_1v_3)&=c,\quad C(v_3v_1)=d,\\
C(v_2v_4)&=e,\quad C(v_4v_2)=f,
\end{aligned}
\right.
\end{equation*}
where $a, b, c, d, e, f$ are pairwise distinct colors.
\end{claim}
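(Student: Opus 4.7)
The plan is first to use Lemma~\ref{lem1} together with the value $f(\overleftrightarrow{K}_{3})=5$ established in Case~1 to pin down $d^{s}(v)$ for every vertex of $D$, and then to carry out a local analysis around each vertex using the Observation.

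First I would show that $d^{s}(v)=2$ for every $v\in V(D)$. On the one hand, $D-v\cong \overleftrightarrow{K}_{3}$ contains no rainbow triangles, so $c(D-v)\le f(\overleftrightarrow{K}_{3})-1=4$; combined with the identity $c(D-v)=c(D)-d^{s}(v)=6-d^{s}(v)$ this yields $d^{s}(v)\ge 2$. On the other hand, the contrapositive of Lemma~\ref{lem1} (applied to $D$ itself) shows that $d^{s}(v)\ge 3$ would force $c(D)\le 5$, contradicting $c(D)=6$.

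Next, fix a vertex $v$ with $C^{s}(v)=\{\alpha,\beta\}$. The Observation says that whenever $\alpha$ lies on some in-arc $uv$ and $\beta$ lies on some out-arc $vw$ with $u\neq w$, the triangle $uvwu$ is rainbow, and the symmetric statement holds with $\alpha$ and $\beta$ swapped. Combined with the fact that both colours are saturated by $v$, a short case analysis reduces the local configuration at $v$ to one of three types: (I) both $\alpha$ and $\beta$ appear only on in-arcs of $v$; (II) both appear only on out-arcs of $v$; (III) there is a unique vertex $u$ such that $\{\alpha,\beta\}=\{C(uv),C(vu)\}$ and each of $\alpha,\beta$ is used on exactly that one arc.

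The main obstacle is ruling out configurations (I) and (II). Suppose for contradiction that $v$ is of type~(I), so $C(u_{1}v)=\alpha$ and $C(u_{2}v)=\beta$ with $\alpha$ (resp.\ $\beta$) appearing only as an in-arc of $v$. Then $\alpha\in C^{s}(u_{1})$, and since $d^{s}(u_{1})=2$ there is a second saturated colour $\gamma\in C^{s}(u_{1})$. Applying the Observation at $u_{1}$ with the out-arc $u_{1}v$ of colour $\alpha$ forces any $\gamma$-in-arc at $u_{1}$ to come from $v$, and a parallel analysis at $u_{2}$ restricts $C^{s}(u_{2})$ similarly; tracing these constraints through the fourth vertex $x$ and inspecting a suitable triangle involving $v,u_{2},x$ produces a rainbow triangle, a contradiction. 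A symmetric argument rules out type~(II).

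Hence every vertex is of type~(III), so each $v$ has a unique partner $u(v)$; the partnership is symmetric because the two single-arc colours on the digon between $v$ and $u(v)$ must exhaust $C^{s}(u(v))$, whose size is $2$. The four vertices therefore split into a perfect matching; after relabelling so the matching is $\{v_{1},v_{3}\}$ and $\{v_{2},v_{4}\}$, the four colours $c=C(v_{1}v_{3})$, $d=C(v_{3}v_{1})$, $e=C(v_{2}v_{4})$, $f=C(v_{4}v_{2})$ are pairwise distinct (each is saturated by a different pair of vertices) and each is used exactly once. Only two colours $a,b$ remain for the eight remaining arcs. Applying the rainbow-triangle-avoidance to each of the eight triangles that combines a diagonal arc with two outer arcs (for example, the triangle $v_{1}v_{2}v_{3}v_{1}$ together with the diagonal arc $v_{3}v_{1}$ of colour $d$ forces $C(v_{1}v_{2})=C(v_{2}v_{3})$) pins down the pattern in the statement: the four arcs $v_{1}v_{2},v_{2}v_{3},v_{3}v_{4},v_{4}v_{1}$ share the common colour $a$ and the four arcs $v_{1}v_{4},v_{4}v_{3},v_{3}v_{2},v_{2}v_{1}$ share the common colour $b$.
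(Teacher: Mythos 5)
Your opening step ($d^{s}(v)=2$ for every vertex, via $f(\overleftrightarrow{K}_{3})=5$ and Lemma~\ref{lem1}) is exactly the paper's, and your closing step (deriving the color pattern once the perfect-matching structure is in hand) is sound. The problem is the middle. Your claimed trichotomy of local configurations at $v$ does not follow from the Observation plus saturation. Concretely, with $C^{s}(v)=\{\alpha,\beta\}$ the configuration $C(uv)=C(vx)=\alpha$, $C(xv)=C(vu)=\beta$ for two distinct vertices $u,x$ is perfectly consistent with the Observation: every in-arc/out-arc pair carrying saturated colors either shares its other endpoint or shares its color, so no rainbow triangle is produced at this stage. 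The same is true of, say, $\alpha$ on the single in-arc $uv$ and $\beta$ on $vu$ together with further in-arcs $xv,wv$ of color $\beta$. Neither configuration is of your type (I), (II) or (III), so the assertion that "every vertex is of type (III)" — on which the entire matching structure rests — is not established. These extra configurations \emph{are} impossible, but ruling them out requires invoking the structure of $D-v\in\mathcal{G}_{3}$ (e.g., in the first configuration one is forced to conclude $c(D-v)\leq 2$, contradicting $c(D-v)=4$); your proposal never does this.

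The elimination of types (I) and (II) is also only gestured at, and the one concrete assertion made there ("$\alpha\in C^{s}(u_{1})$") is unjustified: $\alpha$ being saturated by $v$ and appearing only on in-arcs of $v$ does not prevent $\alpha$ from also coloring $xv$ for the fourth vertex $x$, in which case $\alpha\notin C^{s}(u_{1})$. (A clean way to kill type (I) is again via $\mathcal{G}_{3}$: the triangles $u_{1}vxu_{1}$ and $u_{2}vxu_{2}$ force $C(xu_{1})=C(vx)=C(xu_{2})$, but $xu_{1}$ and $xu_{2}$ lie in the two different directed triangles of $D-v$, whose color sets must be disjoint.) For comparison, the paper avoids all of this local casework by working globally: it classifies each monochromatic subdigraph $D^{i}$ as an arc, a directed path of length $2$ or $3$, or a directed $4$-cycle, and solves a small system counting colors, arcs and directed triangles to conclude there are exactly four singleton colors and two monochromatic $4$-cycles, after which the structure falls out. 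Your vertex-local route can likely be completed, but as written the key reduction is asserted rather than proved, and it is false as stated.
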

\begin{proof}
Since $f(\overleftrightarrow{K}_{3})=5$, $c(D)=6$ and $D$ contains no rainbow triangles, we have $d^s(v)\geq 2$ for each vertex $v\in V(D)$. If there is a vertex $v\in V(D)$ such that $d^{s}(v)\geq 3$, then by Lemma \ref{lem1}, we have $c(D)\leq 5$, a contradiction. So we have $d^{s}(v)= 2$ for every vertex $v \in V(D)$. Thus for each $v\in V(D)$, $D-v$ belongs to $\mathcal{G}_3$.

By the structure of $\mathcal{G}_3$, we know that for each color $i$ the arc-colored digraph $D^i$ must be connected (otherwise, we recolor a component of $D^i$ by a new color, then the obtained arc-colored complete digraph has $f(\overleftrightarrow{K}_{4})$ colors but contains no rainbow triangles, a contradiction) and belong to one of the following four types.
~\\
Type 1: an arc;\\
Type 2: a directed path of length 2;\\
Type 3: a directed path of length 3;\\
Type 4: a directed cycle of length 4.

Let $X_j=\{i\in C(D): \text{ $D^i$ belongs to Type $j$}\}$ and $x_j=|X_j|$ for $j=1, 2, 3, 4$.
Then we have
\begin{equation*}
\left\{                         
\begin{aligned}
&x_1+x_2+x_3+x_4=c(D)\\
&x_1+2x_2+3x_3+4x_4=a(D)\\
&x_2+2x_3+4x_4=2\binom{4}{3}~~~\text{(the number of directed triangles in $D$})\\
&\text{$x_j\in \mathbb{N}$ for $j=1,2,3,4.$}
\end{aligned}
\right.
\end{equation*}
By these equations, we get $x_1=4, x_2=x_3=0$ and $x_4=2$.
Without loss of generality, let $X_1=\{1,2,3,4\}$,  $X_4=\{5,6\}$  and let $uxyzu$ be the directed cycle of length $4$ colored by $5$. If $C(xu)\in X_1$, then $C(yx),C(uz)\not\in X_1$ (otherwise, $yxuy$ or $xuzx$ is a rainbow triangle). This forces $C(yx)=C(uz)=6$. Note that $D^6$ is a directed cycle of length $4$. We have  $C(xu)=6\in X_4$. This contradicts to the assumption that  $C(xu)\in X_1$. Thus $C(xu)\not\in X_1$. This forces $C(xu)=6$. By the symmetry of the cycle $uxyzu$, we get $C(xu)=C(uz)=C(zy)=C(yx)=6$. For each color $i=1,2,3,4$, $D^i$ is an arc.
\end{proof}

Let $D$ be an arc-colored complete digraph of order $n\geq 5$ with vertex set $\{v_{1}, v_{2}, \ldots, v_{n}\}$. Let $$R=\{v_{2i-1}v_{2j}|i=1, 2, \ldots, \lceil\frac{n}{2}\rceil, j=1, 2, \ldots, \lfloor\frac{n}{2}\rfloor\}.$$Color the arcs in $R$ with pairwise distinct colors and color the remaining arcs with a same new color. Then $c(D)=\lfloor\frac{n^{2}}{4}\rfloor+1$ and $D$ contains no rainbow triangles. So $f(\overleftrightarrow{K}_{n})\geq\lfloor\frac{n^{2}}{4}\rfloor+2$ for $n\geq 5$.
\begin{case}
$n=5$.
\end{case}
For $n=5$, if $c(D)\geq \lfloor\frac{n^{2}}{4}\rfloor+2=8$ but $D$ contains no rainbow triangles, then for every vertex $v \in V(D)$, the complete digraph $D-v$ contains no rainbow triangles either. Since $f(\overleftrightarrow{K}_{4})=7$, we have $c(D-v)=c(D)-d^{s}(v)\leq 6$. So for every vertex $v \in V(D)$, we have $d^{s}(v)\geq 2$. On the other hand, by Lemma \ref{lem2}, there must be a vertex $v \in V(D)$ such that $d^{s}(v)\leq \lfloor\frac{n}{2}\rfloor=2$. So there exists a vertex $v \in V(D)$ such that $d^{s}(v)=2$. Let $D'=D-v$, then $D'$ is an arc-colored $\overleftrightarrow{K}_{4}$ without rainbow triangles and $c(D')=6$. By Claim \ref{claim1}, we can assume that $V(D')=\{u, x, y, z\}$ and
\begin{equation*}
\left\{
\begin{aligned}
C(ux)&=C(xy)=C(yz)=C(zu)=5,\\
C(uz)&=C(zy)=C(yx)=C(xu)=6,\\
C(uy)&=1,\quad C(yu)=2,\\
C(xz)&=3,\quad C(zx)=4.
\end{aligned}
\right.
\end{equation*}
Let $C^{s}(v)=\{7, 8\}$. Without loss of generality, we can assume that $C(vu)=7$. Considering the triangle $vuxv$, we have $C(xv)\neq 8$. If $C(vx)=8$, then considering the triangles $vuzv$ and $vxzv$, we have $C(zv)\in \{6,7\}\bigcap\{3,8\}$, a contradiction. So $C(vx)\neq 8$. Similarly, we have
$$8\notin\{C(vy)\}\cup\{C(yv)\}\cup\{C(vz)\}\cup\{C(zv)\}.$$
So $C(uv)=8$. By similar analysis, we have
$$7\notin\{C(vx)\}\cup\{C(xv)\}\cup\{C(vy)\}\cup\{C(yv)\}\cup\{C(vz)\}\cup\{C(zv)\}.$$
Considering the triangles $vuxv$ and $vyuv$, we have $C(xv)=5$ and $C(vy)=2$. But now $xvyx$ is a rainbow triangle, a contradiction.
Thus, we have $f(\overleftrightarrow{K}_{5})=\lfloor\frac{n^{2}}{4}\rfloor+2=8$.
\begin{case}
$n\geq 6$.
\end{case}
Suppose Theorem \ref{thm1} is true for $\overleftrightarrow{K}_{n-1}$, now we consider $\overleftrightarrow{K}_{n}$, $n\geq 6$.
Let $D$ be an arc-colored complete digraph of order $n\geq 6$. If $c(D)\geq \lfloor\frac{n^{2}}{4}\rfloor+2$ but $D$ contains no rainbow triangles, then for every vertex $v \in V(D)$, the digraph $D-v$ contains no rainbow triangles either. Thus, we have $c(D-v)=c(D)-d^{s}(v)\leq \lfloor\frac{(n-1)^{2}}{4}\rfloor+1$. So for every vertex $v \in V(D)$, we have
$$d^{s}(v)\geq \lfloor\frac{n^{2}}{4}\rfloor+2-\left(\lfloor\frac{(n-1)^{2}}{4}\rfloor+1\right)$$
\begin{equation*}
=
\left\{
\begin{aligned}
&\frac{n}{2}+1, &n \text{~is~even};\\
&\frac{n+1}{2}, &n \text{~is~odd}.
\end{aligned}
\right.
\end{equation*}
On the other hand, by Lemma \ref{lem2}, there must be a vertex $v \in V(D)$ such that $d^{s}(v)\leq \lfloor\frac{n}{2}\rfloor$, a contradiction. So we have $f(\overleftrightarrow{K}_{n})=\lfloor\frac{n^{2}}{4}\rfloor+2$ for $n\geq 5$.

The proof is complete. \qed

\noindent\textbf{Proof of Theorem \ref{thm1+}.}
Let $D\in \mathcal{G}_3$.
Since the two arc-disjoint directed triangles $\Delta_1$ and $\Delta_2$ are not rainbow, we have $c(\Delta_1)\leq 2$ and $c(\Delta_2)\leq 2$. Thus $4=c(D)\leq c(\Delta_1)+c(\Delta_2)\leq 4$, the equality holds if and only if $c(\Delta_1)=c(\Delta_2)=2$ and $C(\Delta_1)\bigcap C(\Delta_2)=\emptyset$.

We have already characterized $\mathcal{G}_4$ by Claim \ref{claim1} in Theorem \ref{thm1}.

Let $D\in\mathcal{G}_5$. Then $c(D)=f(\overleftrightarrow{K}_{5})-1=8-1=7$. If $d^s(v)=1$ for a vertex $v\in V(D)$, then we have $c(D-v)=6$. By Claim \ref{claim1} in Theorem \ref{thm1}, we can assume that $V(D-v)=\{u, x, y, z\}$ and
\begin{equation*}
\left\{
\begin{aligned}
C(ux)&=C(xy)=C(yz)=C(zu)=5,\\
C(uz)&=C(zy)=C(yx)=C(xu)=6,\\
C(uy)&=1,\quad C(yu)=2,\\
C(xz)&=3,\quad C(zx)=4.
\end{aligned}
\right.
\end{equation*}
Without loss of generality, we can assume that $C(vu)=7\in C^{s}(v)$. Considering triangles $vuxv$, $vuyv$ and $vuzv$, we have $C(xv)=5$ or $7$, $C(yv)=1$ or $7$ and $C(zv)=6$ or $7$. Considering triangles $vzxv$ and $vzyv$, we  have $C(vz)\in\{4,5,7\}\bigcap\{1,6,7\}$, and hence $C(vz)=7=C(xv)=C(yv)$. Considering triangles $vxzv$ and $vxyv$, we have $C(vx)\in\{5,7\}\bigcap\{3,6,7\}$, and hence $C(vx)=7=C(zv)$. Considering triangles $vyzv$ and $vyxv$, we have $C(vy)\in\{5,7\}\bigcap\{6,7\}$, and hence $C(vy)=7$. Finally, considering triangles $uvxu$ and $uvyu$, we have $C(uv)\in\{6,7\}\bigcap\{2,7\}$, and hence $C(uv)=7$. Thus, all arcs incident to $v$ are colored by $7$ and $D$ belongs to Type I.

	Now let us consider the case that $d^s(v)\geq 2$ for each vertex $v\in V(D)$. Let
	$$X=\{i\in C(D): C(uv)=i \text{~and~} i\in C^s(u)\cap C^s(v)\},$$
    $$Y=\{i\in C(D): i\in C^s(v) \text{~and~} i\not\in C^s(u) \text{~if~} u\neq v\},$$
    $$Z=\{i\in C(D): i\not\in C^s(v) \text{~for any~} v\in V(D)\}.$$
    Let $x,y$ and $z$ be the cardinality of $X, Y$ and $Z$, respectively. Then we have
    \begin{equation*}
    \left\{                         
    \begin{aligned}
    &x+y+z=c(D)\\
    &2x+y=\sum_{v\in V(D)}d^s(v).\\
    \end{aligned}
    \right.
    \end{equation*}
    Recall that $c(D)=7$ and $d^s(v)\geq 2$ for each vertex $v\in V(D)$. We have
        \begin{equation*}
    \left\{                         
    \begin{aligned}
    &x+y+z=7\\
    &2x+y\geq 10.\\
    \end{aligned}
    \right.
    \end{equation*}
    Thus $x\geq z+3\geq 3$.

    Let $H$ be an arc-colored spanning subdigraph of $D$ with the arcs that are assigned colors in $X$. Then $a(H)\geq x\geq 3$. Since each directed path $uvw$ in $H$ implies a rainbow triangle $uvwu$, there is no directed path of length $2$ in $H$. Let $\hat{H}$ be the underlying graph of $H$.
    \setcounter{case}{0}
      \begin{case}
    	$Z=\emptyset$.
    \end{case}
    If $uv,ab\in A(H)$ for four distinct vertices $u,v,a,b$,  then without loss of generality, we can assume that $C(va)=1$. Since $vauv$ and $abva$ are not rainbow triangles, it is easy to see that $C(au)=C(bv)=1$. Thus $1\in Z$. This contradicts that $Z=\emptyset$. Thus there exists a vertex $u$ such that each arc in $H$ is incident to $u$ and hence $d^s(u)\geq x$.


    Let $uv$ be an arc such that $C(uv)\in X$. Let $\{a,b,c\}=V(D)\backslash\{u,v\}$. Since $D$ contains no rainbow triangles and $Z=\emptyset$, we can assume that $C(va)=C(au)=1$, $C(vb)=C(bu)=2$ and $C(vc)=C(cu)=3$. This implies that  $\{1,2,3\}\subseteq Y$ and $y\geq 3$.
    Now we have $x,y\geq 3$ and $x+y=7$. Thus either $x=3,y=4$ or $x=4,y=3$.

    If $x=3,y=4$, then $10=2x+y=\sum_{v\in V(D)\setminus\{u\}}d^s(v)+d^s(u)\geq 11$, a contradiction.

    If $x=4,y=3$, then $11=2x+y=\sum_{v\in V(D)\setminus\{u\}}d^s(v)+d^s(u)\geq 12$, a contradiction.
%
%
    \begin{case}
    	$x\geq 5$.
    \end{case}
    If $H$ contains a cycle $uvu$, namely, $C(uv),C(vu)\in X$, where $v\neq u$, then it is easy to see that none of the arcs in $H$ appears between $\{u,v\}$ and $V(D)\backslash\{u,v\}$. Let $\{a,b,c\}=V(D)\backslash\{u,v\}$. Then either the triangle $abca$ or the triangle $cbac$ contains two arcs of $H$. In both cases, we get a rainbow triangle. So $H$ contains no two oppositely oriented arcs.
    Moreover, there is no odd cycle in $\hat{H}$ (otherwise, there must be a directed path of length $2$ in $H$, a contradiction.)

    Note that $a(H)\geq x\geq 5$. The graph $\hat{H}$ must contain a cycle, which has to be of length $4$, say  $a_1b_1a_2b_2a_1$. Let $\{u\}=V(D)\backslash\{a_1,a_2,b_1,b_2\}$. Since there is no directed path of length $2$ in $H$, we can assume that $a_1b_1, a_1b_2, a_2b_1, a_2b_2\in A(H)$ and all the other arcs in $D[a_1,a_2,b_1,b_2]$ are not contained in $H$.
    Assume that $C(a_1a_2)=1$. Then $1\in Y\cup Z$. Consider triangles $a_1a_2b_1a_1$ and $a_1a_2b_2a_1$. We get $C(b_1a_1)=C(b_2a_1)=1$. Consider $b_2a_1b_1b_2$ and $b_1a_1b_2b_1$. We get $C(b_1b_2)=C(b_2b_1)=1$. By similar analyzing process, we finally see that all the arcs in $A(D-u)\backslash A(H)$ are of color $1$.
    Recall that $a(H)\geq 5$. By the symmetry, we can assume that $u$ is either incident to $a_1$ or $b_1$ in $H$.

    If $u$ is incident to $b_1$ in $H$, then the situation has to be $ub_1\in H$ (since $H$ contains no path of length $2$). Now consider triangles $ub_1a_1u$, $ub_1b_2u$ and $ub_1a_2u$. We get $C(a_1u)=C(b_2u)=C(a_2u)=1$. Consider triangles $ua_1b_2u$ and $ua_2b_2u$. We get $C(ua_1)=C(ua_2)=1$. Again, consider the triangle $ua_1b_1u$. We get $C(b_1u)=1$. Now $c(D-ub_2)=6$. Since $c(D)=7$, there holds $C(ub_2)\not\in C(D-ub_2)$. Thus $ub_2\in A(H)$. If $u$ is incident to $a_1$ in $H$, then by a similar analyzing process, we can obtain that $a_1u,a_2u\in A(H)$ and all the other arcs incident to $u$ are of color 1.

    In summary, $H$ is an orientation of $K_{2,3}$ with partite sets $A$ and $B$ such that $|A|=2,|B|=3$ and all the arcs are from $A$ to $B$ or from $B$ to $A$. The remaining arcs in $D$ are all colored by a same new color. So $D$ belongs to Type II.
\begin{case}
	$z\geq 1$ and $x\leq 4$.
\end{case}
    Recall that $x\geq z+3\geq 4$ and $x+y+z=7$. We have $x=4$, $y=2$ and $z=1$. Note that
    $$10=2x+y= \sum_{v\in V(D)} d^s(v)\geq 2*5=10.$$
    So $d^s(v)=2$ for each vertex $v\in V(D)$. Now we assert that $d^s(v)\geq d_{\hat{H}}(v)$. If each color in $C^s(v)\cap X$ is only assigned to one arc in $D$, then there is nothing to prove. If there is a color in  $C^s(v)\cap X$ assigned to more than two arcs, then by the definition of $X$, we know that these arcs must be two oppositely oriented arcs, say $vw$ and $wv$. Recolor $vw$ by a new color. Then the obtained arc-colored complete digraph $D'$ satisfies that $c(D')=f(\overleftrightarrow{K}_{5})$ but $D'$ contains no rainbow triangles, a contradiction. So we have $d^s(v)\geq d_{\hat{H}}(v)$ for each vertex $v\in V(D)$.
   Thus the maximum degree of $\hat{H}$ is at most 2.

   If $\hat{H}$ contains a path of length 3, then without loss of generality, we can assume that $uvws$ is a path with $uv, wv, ws\in A(H)$. Let $C(vu)=1$, $C(vw)=c$ and let $p$ be the vertex in $D$ different from $u,v,w$ and $s$. Since $D$ contains no rainbow triangles, we obtain that $C(uw)=C(su)=C(vs)=C(sw)=1$ and $C(wu)=C(sv)=c$. It is easy to observe that $1,c\in Z$. Since $z=1$, we have $c=1$ and $Z=\{1\}$. Consider triangles $uvpu$, $wvpw$ and $wspw$. We get $C(vp)=C(pu)=C(pw)=C(sp)=a$. If $a\in C^{s}(p)$, then considering triangles $upwu$, $pvsp$, $wpuw$ and $vpsv$, we can get $$\{C(up)\}\cup\{C(pv)\}\cup\{C(wp)\}\cup\{C(ps)\}\subseteq\{1,a\}.$$ Thus, we have $d^{s}(p)=1$, a contradiction. So $a\notin C^{s}(p)$ and hence $a=1$. If $C(up)=2\in C^{s}(p)$, then considering triangles $vupv$ and $sups$, we can get
   $C(pv), C(ps)\in \{1,2\}.$ Since $d^{s}(p)=2$, we have $C(wp)\neq 2$ and $C(wp)\in C^{s}(p)$. Let $C(wp)=3$. Consider triangles $wpvw$ and $wpsw$. We can get $C(pv), C(ps)\in \{1,3\}$. So $C(pv)=C(ps)=1$. But now $\{2,3,C(uv),C(wv),C(ws)\}\subseteq X$. This contradicts that $x=4$. Thus $C(up)\notin C^{s}(p)$. By similar analyzing process, we can see that $C(pv),C(wp),C(ps)\notin C^{s}(p)$. This implies that $C^{s}(p)=\emptyset$, a contradiction.

    If the longest path in $\hat{H}$ is of length $1$, then the arcs of $H$ form two vertex-disjoint cycles of length 2, say $A(H)=\{uv,vu,pq,qp\}$. Since $z=1$, it is easy to check that all the arcs between $\{u,v\}$ and $\{p,q\}$ has a same color, namely, the unique color in $Z$. Let $Y=\{1,2\}$ and $V(D)\backslash\{u,v,p,q\}=\{w\}$. Then there holds $C^s(w)=\{1,2\}$. Since $D$ contains no rainbow triangles, we have $C(uw)=C(wv), C(vw)=C(wu), C(pw)=C(wq), C(qw)=C(wp)$. By the symmetry, we can assume that $C(uw)=C(wv)=1$. Consider triangles $uwpu,uwqu, pwvp$ and $qwvq$. We can see that the color 2 does not appear between $w$ and $\{p,q\}$. This forces $C(vw)=C(wu)=2$ and all the arcs between $w$ and $\{p,q\}$ are colored by the unique color in $Z$. So $D$ belongs to Type III.

    The remaining case is that $\hat{H}$ is composed of a path of length 2 and a cycle of length 2. Let $V(D)=\{u,v,w,p,q\}$ and $A(H)=\{uv,wv, pq,qp\}$. Assume that $C(vp)=a$ and $C(pv)=b$. Then it is easy to check that each arcs between $\{u,v,w\}$ and $\{p,q\}$ are of color $a$ or $b$, and $a,b\in Z$. This forces $a=b$ (since $z=1$). Now the arcs $vu,uw,wu,vw$ are the only possible arcs that are assigned the colors in $Y$. Thus $c(D[v,u,w])=4$ and each color in $D[v,u,w]$ does not appears on $A(D)\backslash A(D[v,u,w])$. So $D[v,u,w]\in \mathcal{G}_3$ and $D$ belongs to Type III.

Let $D\in \mathcal{G}_6$.
Since $D$ contains no rainbow triangles, we have $c(D-v)\leq 7$, so $d^{s}(v)\geq 3$ for every $v\in V(D)$. On the other hand, by Lemma \ref{lem2}, there is a vertex $v\in V(D)$ such that $d^{s}(v)\leq \lfloor\frac{n}{2}\rfloor=3$. So there is a vertex $v\in V(D)$ such that $d^{s}(v)=3$ and $c(D-v)=7$. Since $D-v$ contains no rainbow triangles, by the above arguments, $D-v\in\mathcal{G}_5$ and thus belongs to one of the three types of digraphs.

\setcounter{case}{0}
\begin{case}
$D-v$ belongs to Type I.
\end{case}
Let $V(D)=\{u,v,w,x,y,z\}$, We can assume that
\begin{equation*}
\left\{
\begin{aligned}
C(uy)&=1,\quad C(yu)=2,\quad
C(xz)=3,\quad C(zx)=4,\\
C(ux)&=C(xy)=C(yz)=C(zu)=5,\\
C(uz)&=C(zy)=C(yx)=C(xu)=6,\\
C(wx)&=C(wy)=C(wz)=C(wu)=C(uw)=C(xw)=C(yw)=C(zw)=7.
\end{aligned}
\right.
\end{equation*}
Since $d^{s}(v)=3$, by Lemma \ref{lem1}, we can assume that $CN^{-}(v)\bigcap C^{s}(v)=\emptyset$. Let $C^{s}(v)=\{8,9,10\}$ and $C(vz)=8$. Considering the triangle $vzwv$, we have $C(wv)=7$. But now $d^{s}(w)\leq 2$, a contradiction.

\begin{case}
$D-v$ belongs to Type III.
\end{case}
Let $V(D)=\{v,a_1,a_2,b_1,b_2,b_3\}$. We can assume that $C(a_1a_2)=1$, $C(a_2a_1)=2$ and $C(\{a_1,a_2\},\{b_1,b_2,b_3\})=\{3\}$. Since $d^{s}(v)=3$, by Lemma \ref{lem1}, we can assume that $CN^{-}(v)\bigcap C^{s}(v)=\emptyset$. Then there must be a vertex $b_j$ such that $C(vb_j)\in C^{s}(v)$. Without loss of generality, we can assume that $C(vb_1)=8\in C^{s}(v)$. Considering triangles $vb_1a_1v$ and $vb_1a_2v$, we have $C(a_1v)=C(a_2v)=3$. Considering the triangle $a_1va_2a_1$, we have $C(va_2)\in \{2,3\}$. Since $C(a_1b_1)=3$, we can see that $3\notin C^{s}(a_2)$ and $d^{s}(a_2)\leq 2$, a contradiction.

\begin{case}
$D-v$ belongs to Type II.
\end{case}
Let $V(D)=\{v,a_1,a_2,b_1,b_2,b_3\}$. We can assume that
\begin{equation*}
\left\{
\begin{aligned}
C(a_1b_1)&=1,\quad C(a_1b_2)=2,\quad
C(a_1b_3)=3,\\
C(a_2b_1)&=4,\quad C(a_2b_2)=5, \quad C(a_2b_3))=6,
\end{aligned}
\right.
\end{equation*}
and the remaining arcs of $D-v$ are all colored by $7$.

\noindent\textbf{Case 3.1.} $CN^{+}(v)\bigcap C^{s}(v)=\emptyset$.

Since $d^{s}(v)=3$, there must be a vertex $b_j$ such that $C(b_jv)\in C^{s}(v)$. Without loss of generality, we can assume that $C(b_1v)=8\in C^{s}(v)$. Considering triangles $va_1b_1v$ and $va_2b_1v$, we have $C(va_1)=1$ and $C(va_2)=4$. Considering triangles $a_1b_2va_1$ and $a_2b_2va_2$, we have $C(b_2v)\in\{1,2\}\bigcap\{4,5\}$, a contradiction.

\noindent\textbf{Case 3.2.} $CN^{-}(v)\bigcap C^{s}(v)=\emptyset$.

Let $C^{s}(v)=\{8,9,10\}$. If $C(va_{1})=8$, then considering triangles $va_1b_1v$ and $va_1b_2v$, we have $C(b_1v)=1$ and $C(b_2v)=2$. Considering triangles $a_2b_1va_2$ and $a_2b_2va_2$, we have $C(va_2)\in\{1,4\}\bigcap\{2,5\}$, a contradiction. So $C(va_{1})\neq 8$. Similarly we can prove that
$$\left(\{C(va_1)\}\cup\{C(va_2)\}\right)\bigcap C^{s}(v)=\emptyset.$$
Thus $C^{s}(v)\subseteq \{C(vb_1),C(vb_2),C(vb_3)\}$. Without loss of generality, we can assume that $C(vb_1)=8$, $C(vb_2)=9$ and $C(vb_3)=10$. Considering the triangle set
$$
\left\{vb_1uv|u\in\{a_1,a_2,b_2,b_3\}\right\}\bigcup\{vb_2b_1v\},
$$
we have
$$
C(\{uv|u\in\{a_1,a_2,b_1,b_2,b_3\}\})=\{7\}.
$$
Considering triangles $va_1b_1v$, $va_1b_2v$, $va_2b_1v$ and $va_2b_2v$, we have
$$
C(va_1)\in \{1,7\}\bigcap\{2,7\}=\{7\}~\text{and}~C(va_2)\in \{4,7\}\bigcap\{5,7\}=\{7\}.
$$
Let $v=a_3$. Then we can see that the spanning subdigraph $H$ of $D$ with $A(H)=\{a_ib_j|i=1,2,3;j=1,2,3\}$ is rainbow and all the remaining arcs are colored by a same new color $7$. So the theorem is true for $3\leq n\leq 6$.

Let $D\in \mathcal{G}_n$, $n\geq 7$.
Suppose the theorem is true for $\overleftrightarrow{K}_{n-1}$. Now we consider $\overleftrightarrow{K}_{n}$, $n\geq 7$.

If $D=\overleftrightarrow{K}_{n}$ contains no rainbow triangles and $c(D)=\lfloor\frac{n^{2}}{4}\rfloor+1$, then $c(D-v)\leq \lfloor\frac{(n-1)^{2}}{4}\rfloor+1$ and $d^{s}(v)\geq \lfloor\frac{n}{2}\rfloor$ for every $v\in V(D)$. On the other hand, by Lemma \ref{lem2}, there is a vertex $v\in V(D)$ such that $d^{s}(v)\leq \lfloor\frac{n}{2}\rfloor$. So there is a vertex $v\in V(D)$ such that $d^{s}(v)=\lfloor\frac{n}{2}\rfloor$ and $c(D-v)=\lfloor\frac{(n-1)^{2}}{4}\rfloor+1$. By induction hypothesis, the vertex set of $D-v$ can be partitioned into two subsets $\{a_1,a_2,\ldots,a_{\lfloor\frac{n-1}{2}\rfloor}\}$ and $\{b_1,b_2,\ldots,b_{\lceil\frac{n-1}{2}\rceil}\}$ such that the spanning subdigraph $H$ of $D$ with $A(H)=\{a_ib_j|i=1,2,\ldots,\lfloor\frac{n-1}{2}\rfloor;j=1,2,\ldots,\lceil\frac{n-1}{2}\rceil\}$ (or $A(H)=\{b_ja_i|i=1,2,\ldots,\lfloor\frac{n-1}{2}\rfloor;j=1,2,\ldots,\lceil\frac{n-1}{2}\rceil\}$) is rainbow and all arcs in $A(D)\setminus A(H)$ are colored by a same new color $c$. By symmetry, we only discuss the case $A(H)=\{a_ib_j|i=1,2,\ldots,\lfloor\frac{n-1}{2}\rfloor;j=1,2,\ldots,\lceil\frac{n-1}{2}\rceil\}$. If $n$ is odd, then we divide the rest of the proof into two cases.

\setcounter{case}{0}
\begin{case}
$CN^{+}(v)\bigcap C^{s}(v)=\emptyset$.
\end{case}

If there is a vertex $b_j$ such that $C(b_jv)\in C^{s}(v)$. Without loss of generality, we can assume that $C(b_1v)\in C^{s}(v)$. Considering triangles $va_1b_1v$ and $va_2b_1v$, we have $C(va_1)=C(a_1b_1)$ and $C(va_2)=C(a_2b_1)$. Considering triangles $a_1b_2va_1$ and $a_2b_2va_2$, we have $$C(b_2v)\in\{C(a_1b_1),C(a_1b_2)\}\bigcap\{C(a_2b_1),C(a_2b_2)\}.$$ But $A(H)=\{a_ib_j|i=1,2,\ldots,\lfloor\frac{n-1}{2}\rfloor;j=1,2,\ldots,\lceil\frac{n-1}{2}\rceil\}$ is rainbow, a contradiction. So $C(b_jv)\notin C^{s}(v)$, for $j=1,2,\ldots,\lceil\frac{n-1}{2}\rceil$.
Thus $C^{s}(v)\subseteq \{C(a_1v),\ldots,C(a_{\lfloor\frac{n-1}{2}\rfloor}v)\}$. Since $d^{s}(v)=\lfloor\frac{n}{2}\rfloor=\lfloor\frac{n-1}{2}\rfloor$, we can see that $C^{s}(v)= \{C(a_1v),\ldots,C(a_{\lfloor\frac{n-1}{2}\rfloor}v)\}$. Considering the triangle set
$$\left\{vua_1v|u\in V(D)\setminus \{v,a_1\}\right\}\bigcup\{va_1a_2v\},$$
we have
$$C(\{vu|u\in V(D)\setminus \{v\}\})=\{c\}.$$
Considering triangles $va_1b_jv$ and $va_2b_jv$, for $j=1,2,\ldots,\lceil\frac{n-1}{2}\rceil$, we have
$$C(b_jv)\in \{C(a_1b_j),c\}\bigcap\{C(a_2b_j),c\}=\{c\}.$$
Let $v=b_{\lceil\frac{n}{2}\rceil}$. Then we can see that the spanning subdigraph $H$ of $D$ with $A(H)=\{a_ib_j|i=1,2,\ldots,\lfloor\frac{n}{2}\rfloor;j=1,2,\ldots,\lceil\frac{n}{2}\rceil\}$ is rainbow and all the remaining arcs are colored by a same new color $c$.

\begin{case}
$CN^{-}(v)\bigcap C^{s}(v)=\emptyset$.
\end{case}
By similar analysis, we can see that the spanning subdigraph $H$ of $D$ with $A(H)=\{a_ib_j|i=1,2,\ldots,\lceil\frac{n}{2}\rceil;j=1,2,\ldots,\lfloor\frac{n}{2}\rfloor\}$ is rainbow and all the remaining arcs are colored by a same new color $c$, where $v=a_{\lceil\frac{n}{2}\rceil}$.

If $n$ is even, then by similar analysis we can see that the spanning subdigraph $H$ of $D$ with  $A(H)=\{a_ib_j|i=1,2,\ldots,\lfloor\frac{n}{2}\rfloor;j=1,2,\ldots,\lceil\frac{n}{2}\rceil\}$ is rainbow and all the remaining arcs are colored by a same new color $c$, where $v=a_{\lfloor\frac{n}{2}\rfloor}$.

The proof is complete. \qed
\setcounter{case}{0}

\noindent\textbf{Proof of Theorem \ref{thm2}.}
Suppose the contrary. Let $D$ be a counterexample with the smallest number of vertices, and then with the smallest number of arcs.
\setcounter{claim}{0}
\begin{claim}\label{claim2.1}
$D$ contains two arcs $uv$ and $xy$ with a same color, where $xy\neq vu$.
\end{claim}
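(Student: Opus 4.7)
The plan is to argue by contradiction. Suppose Claim \ref{claim2.1} fails; then every color class of $D$ consists of either a single arc or a pair of two oppositely oriented arcs $\{uv,vu\}$. The first step will be a simple structural observation: under this assumption, \emph{every} directed triangle of $D$ must already be rainbow. Indeed, in a directed triangle $uvwu$ the three arcs $uv,vw,wu$ share exactly one endpoint pairwise, so no two of them can form an opposite pair; by our assumption any two of them must therefore receive distinct colors.

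Since $D$ is a counterexample to Theorem \ref{thm2} and so contains no rainbow triangle, the observation forces $D$ to contain no directed triangle at all. Sridharan's bound, cited at the end of Section 1, then yields $a(D)\leq \lfloor n^2/2\rfloor$. Combined with the trivial inequality $c(D)\leq a(D)$, this gives
\[
a(D)+c(D)\;\leq\; 2\lfloor n^2/2\rfloor\;\leq\; n^2.
\]

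The last step is an arithmetic check. The hypothesis of Theorem \ref{thm2} reads $a(D)+c(D)\geq n(n-1)+f(\overleftrightarrow{K}_n)$, and using the closed form of $f(\overleftrightarrow{K}_n)$ supplied by Theorem \ref{thm1} one verifies $f(\overleftrightarrow{K}_n)>n$ in each of the cases $n=3,4$ and $n\geq 5$. Hence $a(D)+c(D)\geq n(n-1)+n+1=n^2+1$, contradicting $a(D)+c(D)\leq n^2$. Notably the minimality of $D$ is not needed for this claim; only the global hypothesis and the absence of a rainbow triangle are used. I do not anticipate any real obstacle beyond the routine verification that $f(\overleftrightarrow{K}_n)>n$ for every $n\geq 3$.
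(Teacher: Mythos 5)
Your proof is correct. It is essentially the paper's argument run in contrapositive form: both proofs ultimately rest on Sridharan's extremal bound and the same numerical comparison of $a(D)+c(D)$ with $2\lfloor n^2/2\rfloor$. The paper first bounds $c(D)\leq\lfloor n^2/2\rfloor$ (by taking one arc of each colour, which would otherwise force a rainbow triangle), deduces $a(D)>\lfloor n^2/2\rfloor$, hence exhibits a directed triangle and reads the repeated-colour, non-opposite pair off that non-rainbow triangle. You instead negate the claim, observe that then every directed triangle would be rainbow (no two arcs of a directed triangle are mutually opposite), conclude that $D$ is triangle-free, and use $c(D)\leq a(D)\leq\lfloor n^2/2\rfloor$ to contradict the hypothesis $a(D)+c(D)\geq n^2+1$. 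Your route is marginally more economical in that it replaces the one-arc-per-colour step by the trivial bound $c(D)\leq a(D)$; your arithmetic check that $f(\overleftrightarrow{K}_n)\geq n+1$ for all $n\geq 3$ is right, and you are also correct that minimality of the counterexample plays no role in this claim.
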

\begin{proof}
Recall that the maximum number of arcs among all digraphs of order $n$ without directed triangles is $\lfloor\frac{n^{2}}{2}\rfloor$ (see \cite{Sridharan}). If $c(D)\geq\lfloor\frac{n^{2}}{2}\rfloor+1$, then $D$ contains a rainbow triangle, a contradiction. So $c(D)\leq \lfloor\frac{n^{2}}{2}\rfloor$. Thus, we have
\begin{equation*}
a(D)-\lfloor\frac{n^{2}}{2}\rfloor\geq n(n-1)+\lfloor\frac{n^{2}}{4}\rfloor+2-2\lfloor\frac{n^{2}}{2}\rfloor
=
\left\{
\begin{aligned}
&\frac{n(n-4)}{4}+2>0, &n \text{~is~even};\\
&\frac{(n-1)(n-3)}{4}+2>0, &n \text{~is~odd}.
\end{aligned}
\right.
\end{equation*}
So $a(D)>\lfloor\frac{n^{2}}{2}\rfloor$. Namely, $D$ contains a directed triangle $\Delta$ and at least two arcs of $\Delta$ are colored by a same color. Note that two arcs of a triangle can only have one common end. So $D$ contains two arcs $uv$ and $xy$ with a same color, where $xy\neq vu$.
\end{proof}

\begin{claim}\label{claim2.2}
\[a(D)+c(D)=\begin{cases}
n(n-1)+\lfloor\frac{n^{2}}{4}\rfloor+3,&\text{$n=3, 4$};\\
n(n-1)+\lfloor\frac{n^{2}}{4}\rfloor+2,&
\text{$n\geq 5$ }.
\end{cases}\]
\end{claim}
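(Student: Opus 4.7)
The plan is to derive the equality by a direct arc-deletion argument that exploits the minimality of $D$. First I would assume for contradiction that $a(D)+c(D)$ strictly exceeds the threshold appearing in the statement of Theorem~\ref{thm2}, i.e., $a(D)+c(D)$ is at least that threshold plus one. Claim~\ref{claim2.1} then produces two distinct arcs $uv$ and $xy$ (with $xy\neq vu$) that share a common color in $D$.

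Next I would consider the arc-colored digraph $D'=D-xy$ obtained by deleting the single arc $xy$. Because the shared color is still present in $D'$ on the arc $uv$, removing $xy$ drops the arc count by exactly one while leaving the color count unchanged; that is, $a(D')=a(D)-1$ and $c(D')=c(D)$. Consequently $a(D')+c(D')=a(D)+c(D)-1$, which by the assumed strict inequality is still at least the threshold required by Theorem~\ref{thm2}. Thus $D'$ still satisfies the hypothesis of Theorem~\ref{thm2} on $n$ vertices.

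Finally I would appeal to the choice of $D$ as a counterexample minimizing first the number of vertices and then the number of arcs. Since $D'$ has the same vertex set as $D$ but strictly fewer arcs, $D'$ cannot be a counterexample, so $D'$ must contain a rainbow triangle. But $D'$ is a subdigraph of $D$ with the inherited arc-coloring, so that rainbow triangle sits inside $D$ as well, contradicting the hypothesis that $D$ has none. This contradiction forces equality in the lower bound, which is precisely the claim. The only nontrivial ingredient is Claim~\ref{claim2.1}, which guarantees the existence of a repeated color that can be safely thinned out; everything else is bookkeeping, so I do not anticipate a real obstacle.
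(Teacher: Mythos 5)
Your proposal is correct and follows essentially the same route as the paper: both use Claim~\ref{claim2.1} to find a repeated color, delete one arc carrying it so that $a$ drops by one while $c$ is unchanged, and then invoke the minimality of the counterexample (fewest vertices, then fewest arcs) to reach a contradiction. The only cosmetic difference is that the paper phrases the contradiction as ``$D-a_1$ is a smaller counterexample,'' whereas you unpack this into ``$D'$ is not a counterexample, so it has a rainbow triangle, which then lies in $D$''; these are equivalent.
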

\begin{proof}
By Claim \ref{claim2.1}, let $a_{1}$ and $a_{2}$ be two arcs with a same color. Then $a(D-a_{1})=a(D)-1$ and $c(D-a_{1})=c(D)$. If
\[a(D)+c(D)\geq\begin{cases}
n(n-1)+\lfloor\frac{n^{2}}{4}\rfloor+4,&\text{$n=3, 4$};\\
n(n-1)+\lfloor\frac{n^{2}}{4}\rfloor+3,&
\text{$n\geq 5$ },
\end{cases}\]
then
\[a(D-a_{1})+c(D-a_{1})\geq\begin{cases}
n(n-1)+\lfloor\frac{n^{2}}{4}\rfloor+3,&\text{$n=3, 4$};\\
n(n-1)+\lfloor\frac{n^{2}}{4}\rfloor+2,&
\text{$n\geq 5$ }.
\end{cases}\]
Note that $D-a_{1}$ contains no rainbow triangles either. Thus $D-a_{1}$ is a counterexample with fewer arcs, a contradiction.
\end{proof}

\begin{claim}\label{claim2.3}
For every $v\in V(D)$, we have
\[d(v)+d^{s}(v)\geq\begin{cases}
2(n-1)+\frac{n}{2}+1,&n \text{~is~even};\\
2(n-1)+\frac{n-1}{2}+1,&n \text{~is~odd~and~} n\neq 5;\\
10, &n=5.
\end{cases}\]
\end{claim}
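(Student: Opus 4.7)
The plan is to exploit the minimality of the counterexample. For any vertex $v \in V(D)$, the digraph $D - v$ on $n - 1$ vertices contains no rainbow triangle (being a subdigraph of $D$), but by the choice of $D$ it cannot itself be a counterexample to Theorem~\ref{thm2}. Hence $D - v$ must fail the hypothesis of Theorem~\ref{thm2} for $n - 1$ vertices; that is,
\[
a(D-v) + c(D-v) \;\leq\; \tau(n-1) - 1,
\]
where $\tau(m) := m(m-1) + \lfloor m^2/4\rfloor + 3$ for $m \in \{3,4\}$ and $\tau(m) := m(m-1) + \lfloor m^2/4\rfloor + 2$ for $m \geq 5$.

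Next I would substitute $a(D-v) = a(D) - d(v)$ and the identity $c(D-v) = c(D) - d^s(v)$ from the opening remark of Section~2, together with the exact value of $a(D) + c(D)$ supplied by Claim~\ref{claim2.2}, to rearrange the previous inequality into
\[
d(v) + d^s(v) \;\geq\; \bigl(a(D) + c(D)\bigr) - \tau(n-1) + 1.
\]
The right-hand side is then computed case-by-case, using the elementary identity that $\lfloor n^2/4 \rfloor - \lfloor (n-1)^2/4 \rfloor$ equals $n/2$ when $n$ is even and $(n-1)/2$ when $n$ is odd. For $n \geq 6$ even and $n \geq 7$ odd, both $n$ and $n-1$ fall in the ``$+2$'' regime of $\tau$, so the bound collapses to $2(n-1) + n/2 + 1$ and $2(n-1) + (n-1)/2 + 1$ respectively. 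For $n = 4$, both $n$ and $n-1 = 3$ lie in the ``$+3$'' regime, yielding $9$; and the case $n = 5$ is the anomalous one where $n$ itself uses ``$+2$'' while $n - 1 = 4$ still uses ``$+3$'', producing the value $10$, which is exactly why the claim singles $n = 5$ out.

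The main obstacle is just this case bookkeeping around the jump of $\tau$ at $m = 5$ and the parity-dependent floor identity; no new combinatorial idea is required beyond the minimality argument. One should also note that $n = 3$ need not be considered, since for $n = 3$ the hypothesis of Theorem~\ref{thm2} would force $a(D) = 6$ and $c(D) \geq 5 = f(\overleftrightarrow{K}_3)$, which by Theorem~\ref{thm1} already yields a rainbow triangle, so a minimum counterexample has $n \geq 4$.
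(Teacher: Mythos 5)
Your proposal is correct and is essentially the paper's own argument: both use the vertex-minimality of the counterexample to force $a(D-v)+c(D-v)$ below the threshold of Theorem~\ref{thm2} for $n-1$ vertices, then substitute $a(D-v)=a(D)-d(v)$, $c(D-v)=c(D)-d^{s}(v)$ and the exact value from Claim~\ref{claim2.2}. Your explicit remark that a minimal counterexample has $n\geq 4$ (so that $D-v$ has at least $3$ vertices) is a small point the paper leaves implicit, but the case bookkeeping around the jump at $m=5$ and the resulting values ($9$ for $n=4$, $10$ for $n=5$) all check out.
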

\begin{proof}
Note that $a(D-v)=a(D)-d(v)$ and $c(D-v)=c(D)-d^{s}(v)$. If
\[d(v)+d^{s}(v)\leq\begin{cases}
2(n-1)+\frac{n}{2},&n \text{~is~even};\\
2(n-1)+\frac{n-1}{2},&n \text{~is~odd~and~} n\neq 5;\\
9, &n=5,
\end{cases}\]
then
$$a(D-v)+c(D-v)=a(D)+c(D)-\left(d(v)+d^{s}(v)\right)$$
\begin{equation*}
\geq\begin{cases}
(n-1)(n-2)+\lfloor\frac{(n-1)^{2}}{4}\rfloor+3,&\text{$n=4, 5$};\\
(n-1)(n-2)+\lfloor\frac{(n-1)^{2}}{4}\rfloor+2,&\text{$n\geq 6$ }.
\end{cases}
\end{equation*}
Note that $D-v$ does not contain a rainbow triangle. Thus $D-v$ is a counterexample with fewer vertices, a contradiction.
\end{proof}

\begin{claim}\label{claim2.4}
$\sum_{v\in V(D)}d^{s}(v)\leq 2c(D)-1$.
\end{claim}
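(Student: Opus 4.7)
The plan is to swap the order of summation and bound, for each color, the number of vertices that saturate it. Concretely, by double counting,
\[
\sum_{v\in V(D)} d^{s}(v) \;=\; \sum_{v\in V(D)} |C^{s}(v)| \;=\; \sum_{c\in C(D)} \bigl|\{v\in V(D) : c\in C^{s}(v)\}\bigr|,
\]
so it suffices to prove that each color is saturated by at most $2$ vertices, and that at least one color is saturated by at most $1$ vertex.

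The first step is structural: if a color $c$ is saturated by two distinct vertices $u$ and $w$, then every arc of color $c$ must be incident to both $u$ and $w$, hence must lie between $u$ and $w$. Thus the arcs of color $c$ are a (nonempty) subset of $\{uw, wu\}$, and in particular no third vertex can saturate $c$. This gives the crude bound $|\{v : c\in C^{s}(v)\}|\le 2$ for every color $c$, and therefore $\sum_{v} d^{s}(v)\le 2c(D)$.

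To sharpen this by $1$, I invoke Claim \ref{claim2.1}: there exist two arcs $uv$ and $xy$ sharing a common color, say $c_{0}$, with $xy\ne vu$ (and of course $xy\ne uv$ since $D$ has no multiple arcs). These two conditions force $\{x,y\}\ne\{u,v\}$, so the two arcs of color $c_{0}$ do not lie between a single pair of vertices. By the structural observation above, $c_{0}$ is then saturated by at most one vertex (the unique element of $\{u,v\}\cap\{x,y\}$, if any). Combining,
\[
\sum_{v\in V(D)} d^{s}(v) \;\le\; 2(c(D)-1) + 1 \;=\; 2c(D)-1,
\]
as desired. The only subtlety is the verification of the structural observation about colors saturated by two vertices; the rest is a direct application of Claim \ref{claim2.1}, and I do not anticipate any serious obstacle.
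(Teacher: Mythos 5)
Your proof is correct and follows essentially the same route as the paper: bound each color's contribution by $2$ via the observation that a color saturated by two vertices must live on arcs between that single pair, then use Claim \ref{claim2.1} to find one color that cannot be saturated by two vertices, saving the extra $1$. Your explicit double-counting formulation and the verification that $\{uv,xy\}\not\subseteq\{ab,ba\}$ are just slightly more detailed versions of the paper's argument.
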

\begin{proof}
Let $c$ be an arbitrary color in $C(D)$. Note that  each color $c$ can only be saturated by at most two vertices. So $\sum_{v\in V(D)}d^{s}(v)\leq 2c(D)$. Moreover, $c$ is saturated by exactly two vertices if and only if $c$ appears on only one arc or on a pair of arcs between two vertices. By Claim \ref{claim2.1}, $D$ contains two arcs $uv$ and $xy$ with a same color, where $xy\neq vu$. Thus, at least one color cannot be saturated by exactly two vertices. So $\sum_{v\in V(D)}d^{s}(v)\leq 2c(D)-1$.
\end{proof}

By Claims \ref{claim2.2}-\ref{claim2.4}, we can get that if $n\geq 6$  is even, then
\begin{equation}
2n(n-1)+\frac{n^{2}}{2}+n\leq \sum_{v\in V(D)}\left(d(v)+d^{s}(v)\right)\leq 2a(D)+2c(D)-1=2n(n-1)+\frac{n^{2}}{2}+3.
\end{equation}
This implies that $n\leq 3$, a contradiction.

If $n\geq 7$ is odd, then
\begin{equation}
\begin{aligned}
2n(n-1)+\frac{n(n-1)}{2}+n\leq \sum_{v\in V(D)}\left(d(v)+d^{s}(v)\right)&\leq 2a(D)+2c(D)-1\\
&=2n(n-1)+\frac{(n-1)(n+1)}{2}+3.
\end{aligned}
\end{equation}
This implies that $n\leq 5$, a contradiction. So it suffices to consider the cases $n=3, 4, 5$.

For $n=3$, since $a(D)+c(D)= 11$ and $a(D)\leq 6$, we have $c(D)\geq 5=\lfloor\frac{n^{2}}{2}\rfloor+1$. So $D$ contains a rainbow triangle, a contradiction.

For $n=4$, we have $a(D)+c(D)\geq 19$. If $a(D)=12$, then $D\cong \overleftrightarrow{K}_4$ and $c(D)\geq 7$. By Theorem \ref{thm1}, $D$ contains a rainbow triangle, a contradiction. If $a(D)\leq 10$, then $c(D)\geq 9=\lfloor \frac{4^2}{2}\rfloor +1$. We know that $D$ contains a rainbow triangle, a contradiction. The only case left is that $a(D)=11=a(\overleftrightarrow{K}_4)-1$ and $c(D)=8$. Let $u$ be a vertex in $D$ such that $D-u\cong \overleftrightarrow{K}_3$. Since $f(\overleftrightarrow{K}_3)=5$, we have $d^s(u)\geq 4$. Let $V(D-u)=\{x,y,z\}$. Then there must exist two vertices in $V(D-u)$ (say $x$ and $y$) such that $c(ux)$ and $c(yu)$ are two distinct colors in $C^s(u)$. This implies that $uxyu$ is a rainbow triangle, a contradiction.

\begin{lem}\label{lem3}
Let $D$ be an arc-colored digraph of order $3$. If $a(D)+c(D)=10$ and $D$ contains no rainbow triangle, then $D\cong\overleftrightarrow{K}_{3}$.
\end{lem}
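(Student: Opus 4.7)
The plan is to exploit the trivial upper bounds $a(D)\le 6$ (there are only six possible arcs on three vertices of a simple loopless digraph) and $c(D)\le a(D)$, together with the hypothesis $a(D)+c(D)=10$. From $2a(D)\ge a(D)+c(D)=10$ we obtain $a(D)\ge 5$, so only the cases $a(D)=5$ and $a(D)=6$ remain. In the latter case $D$ already contains every possible arc, hence $D\cong \overleftrightarrow{K}_3$ and we are done.

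The task therefore reduces to ruling out $a(D)=5$. In that case $c(D)=5=a(D)$, so every arc of $D$ carries its own color, i.e.\ the whole digraph is rainbow. The key structural observation is that on any three-vertex set $\{u,v,w\}$, the six arcs of $\overleftrightarrow{K}_3$ decompose into two arc-disjoint directed triangles $uvwu$ and $uwvu$ of opposite orientation. Hence deleting any single arc leaves at least one of these two triangles intact; this surviving directed triangle in $D$ has pairwise distinct colors and is therefore rainbow, contradicting the hypothesis that $D$ contains no rainbow triangle. This finishes the proof.

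I do not anticipate a genuine obstacle here. The whole argument is a two-line counting step plus the elementary decomposition of $\overleftrightarrow{K}_3$ into two oppositely oriented triangles; the only minor point to flag explicitly is the inequality $c(D)\le a(D)$, which is immediate from the definition of the color number and is what rules out the small-$a(D)$ possibilities before the case analysis even begins.
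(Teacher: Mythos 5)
Your proof is correct, but it takes a different route from the paper's. The paper's proof is a one-liner resting on the fact (quoted from Sridharan in the introduction) that a digraph of order $n$ with no directed triangle has at most $\lfloor n^2/2\rfloor$ arcs, so a digraph with no \emph{rainbow} triangle satisfies $c(D)\le\lfloor n^2/2\rfloor=4$ for $n=3$; combined with $a(D)+c(D)=10$ this forces $a(D)\ge 6$, hence $a(D)=6$ and $D\cong\overleftrightarrow{K}_3$ immediately, with no case analysis. You instead use only the trivial bound $c(D)\le a(D)$, which yields the weaker conclusion $a(D)\ge 5$ and obliges you to kill the case $a(D)=5$ by hand; your argument there (the coloring is injective, the six arcs of $\overleftrightarrow{K}_3$ split into two arc-disjoint oppositely oriented triangles, and deleting one arc leaves one of them intact and hence rainbow) is valid. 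The trade-off: your version is fully self-contained and does not invoke the extremal bound for triangle-free digraphs, while the paper's version is shorter and uses the same mechanism that drives the rest of the proof of Theorem~\ref{thm2} (and of Lemma~\ref{lem4}), so it integrates more uniformly into the surrounding argument.
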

\begin{proof}
Since $D$ contains no rainbow triangle, we have $c(D)\leq \lfloor\frac{n^{2}}{2}\rfloor=4$ and $a(D)\geq 6$. So $c(D)=4$, $a(D)=6$ and $D\cong\overleftrightarrow{K}_{3}$.
\end{proof}
%
%
\begin{lem}\label{lem4}
	Let $D$ be an arc-colored digraph of order $4$. If $a(D)+c(D)=18$ and $D$ contains no rainbow triangle, then $D\cong\overleftrightarrow{K}_{4}$.
\end{lem}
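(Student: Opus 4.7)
My plan is to follow the strategy of Claims \ref{claim2.1}--\ref{claim2.4} in the proof of Theorem \ref{thm2}, specialized to $n=4$ and to the extremal value $a(D)+c(D)=18$. First I observe $c(D)\le 8=\lfloor 4^2/2\rfloor$ (else $D$ has a rainbow triangle by choosing one arc per color and applying Sridharan's bound), whence $a(D)\ge 10>8$. This in turn produces a directed triangle with two arcs of the same color, and since two arcs of a triangle share exactly one endpoint, those two arcs are not opposite, giving the analogue of Claim \ref{claim2.1}. The double-counting of Claim \ref{claim2.4} then yields $\sum_{v\in V(D)}d^s(v)\le 2c(D)-1$.

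Next I locate a vertex suitable for Lemma \ref{lem3}. For every $v\in V(D)$ one has $d(v)+d^s(v)\ge 8$: otherwise $a(D-v)+c(D-v)\ge 11$, and since $a(D-v)\le 6$ this forces $c(D-v)\ge 5>\lfloor 3^2/2\rfloor$, so by Sridharan $D-v$ contains a rainbow triangle, contradicting the hypothesis on $D$. Summing over $v$ yields
\[
\sum_{v\in V(D)}\bigl(d(v)+d^s(v)\bigr)=2a(D)+\sum_{v\in V(D)}d^s(v)\le 2a(D)+2c(D)-1=35,
\]
so the four values $d(v)+d^s(v)$ cannot all be $\ge 9$. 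Hence some $v$ satisfies $d(v)+d^s(v)=8$, giving $a(D-v)+c(D-v)=10$; since $D-v$ still has no rainbow triangle, Lemma \ref{lem3} forces $D-v\cong\overleftrightarrow{K}_3$.

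To finish, I show $d(v)=6$, which together with $D-v\cong\overleftrightarrow{K}_3$ gives $D\cong\overleftrightarrow{K}_4$. Since $d^s(v)\le d(v)$ and $d(v)+d^s(v)=8$, one has $d(v)\in\{4,5,6\}$. If $d(v)=4$, then $d^s(v)=4\ge 3$ and $D-v\cong\overleftrightarrow{K}_3$, so Lemma \ref{lem1} applies and forces $d^s(v)\le\max(d^+(v),d^-(v))\le 3$, a contradiction. The main obstacle is $d(v)=5$: here $d^s(v)=3$, and up to in/out symmetry the missing arc is $vx$, so $d^+(v)=2$ and $d^-(v)=3$. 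Lemma \ref{lem1} now forces $CN^+(v)\cap C^s(v)=\emptyset$ (else $d^s(v)\le d^+(v)=2$), so the three in-arcs $xv$, $yv$, $zv$ carry three distinct saturated colors. Writing $C(vy)=b_1$, $C(vz)=b_2$ and forcing ``not rainbow'' on each of the four triangles through $v$ that survive in $D$ (namely $xvyx$, $vyzv$, $vzyv$, $vzxv$) gives $C(yx)=C(yz)=b_1$ and $C(zy)=C(zx)=b_2$. But $c(D-v)=c(D)-d^s(v)=4$, so the colors appearing on $A(D-v)$, namely the multiset $\{b_1,b_1,b_2,b_2,C(xy),C(xz)\}$, must realize four distinct values; in particular $b_1\ne b_2$, and then the triangle $xyzx$ has three pairwise distinct colors $C(xy),b_1,b_2$ and is rainbow, a contradiction. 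The mirror subcase (missing arc $xv$) is identical with in/out interchanged. Hence $d(v)=6$ and $D\cong\overleftrightarrow{K}_4$.
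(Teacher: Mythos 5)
Your proof is correct and follows the paper's own strategy almost step for step: the lower bound $d(v)+d^{s}(v)\geq 8$ for every vertex, the count $\sum_{v}\left(d(v)+d^{s}(v)\right)\leq 2a(D)+2c(D)-1=35$ to locate a vertex with $d(v)+d^{s}(v)=8$, and Lemma \ref{lem3} to conclude $D-v\cong\overleftrightarrow{K}_{3}$. The only (harmless) divergence is the endgame: the paper rules out $d(v)\leq 5$ by invoking the $\mathcal{G}_3$ structure from Theorem \ref{thm1+} to show that no in-arc of $v$ can exist, whereas you split on $d(v)\in\{4,5\}$, use Lemma \ref{lem1} to place the saturated colors, and get the contradiction directly from $c(D-v)=4$ forcing the rainbow triangle $xyzx$ --- a slightly more self-contained finish that avoids Theorem \ref{thm1+}.
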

\begin{proof}
For every $v\in V(D)$, since $D-v$ contains no rainbow triangles, we have $a(D-v)+c(D-v)\leq 10$ and hence $d(v)+d^{s}(v)\geq 8$. If $d(v)+d^s(v)\geq 9$ for every $v\in V(D)$, then
\begin{equation}
36\leq \sum_{v\in V(D)}\left(d(v)+d^{s}(v)\right)\leq 2a(D)+2c(D)-1=35,
\end{equation}
a contradiction. So there is a vertex $v\in V(D)$ such that $d(v)+d^{s}(v)= 8$. Let $V(D)=\{v, x, y, z\}$ and $d(v)+d^{s}(v)= 8$. Then $a(D-v)+c(D-v)= 10$. By Lemma \ref{lem3}, $D-v\cong\overleftrightarrow{K}_{3}$, and thus $D-v\in\mathcal{G}_3$.
Furthermore, by Theorem \ref{thm1+}, we know that the color sets of the two directed triangles in $D-v$ is disjoint. Let $C(D-v)=\{1, 2, 3, 4\}$. If $D\not\cong \overleftrightarrow{K}_{4}$, then $d(v)\leq 5$ and $d^{s}(v)\geq 3$. Let $\{5,6,7\}\subseteq C^{s}(v)$. If there exist two vertices in $V(D-v)$ (say $x$ and $y$) such that $c(vx)$ and $c(yv)$ are two distinct colors in $C^s(v)$, then we have $vxyv$ is a rainbow triangle, a contradiction. So we can assume that $C(vx)=5, C(vy)=6$ and $C(vz)=7$. If  $yv\in A(D)$, then consider triangles $vxyv$ and $vzyv$. We get $C(xy)=C(yv)$ and $C(zy)=C(yv)$. Thus $C(xy)=C(zy)$. This contradicts the structure of $D-v\in\mathcal{G}_3$. So we have $yv\not\in A(D)$. Similarly, we can get $xv,zv\not\in A(D)$. Thus $d(v)=d^s(v)=3$. This contradicts that $d(v)+d^{s}(v)= 8$.
%
%
\end{proof}

For $n=5$, we have $a(D)+c(D)\geq 28$. For each integer $p$, let $X_p=\{u\in V(D):a(D-u)+c(D-u)=p\}$ and let $x_p=|X_p|$. Since $D$ contains no rainbow triangle, $a(D-u)+c(D-u)\leq 18$ for each vertex $u\in V(D)$. So we have
\begin{equation}\label{eq1}
  \sum_{p\leq 18} x_p=5.
\end{equation}
Let $Y_i=\{u: i\in C(D-u)\}$ for each $i\in C(D)$ and let $y_i=|Y_i|$. Since each color appears in at least $3$ induced subdigraphs of order $4$, we have $y_i\geq 3$.
Note that $D$ has $5$ induced subdigraphs of order $4$, every arc of $D$ belongs to exactly $3$ of such induced subdigraphs and every color $i\in C(D)$ belongs to exactly $y_{i}$ of them. So we have

\begin{equation}\label{eq2}
\sum_{p\leq 18}px_p=3a(D)+\sum_{i\in C(D)}y_i=3a(D)+3c(D)+\sum_{i\in C(D)}(y_i-3)\geq 84+\sum_{i\in C(D)}(y_i-3).
\end{equation}
By $(\ref{eq2})-16\times(\ref{eq1})$ we can get $$\sum_{i\in C(D)}(y_i-3)\leq 2x_{18}+x_{17}-4.$$
\begin{case}
	$x_{18}=0$.
\end{case}
In this case, since $x_{17}\leq 5$, we have $0\leq \sum_{i\in C(D)}(y_i-3)\leq 1$. This means that either $y_i=3$ for all $i\in C(D)$ or there is only one color $j$ such that $y_j=4$.

If $y_i=3$ for all $i\in C(D)$, then every triangle in $D$ must be a rainbow triangle. This implies that $D$ contains no directed triangles. So $a(D)\leq \lfloor \frac{5^2}{2}\rfloor=12$. Thus $$28\leq a(D)+c(D)\leq 2a(D)\leq 24,$$
a contradiction.  If there is only one color $j$ such that $y_j=4$. Then let $u$ be the only vertex in $D$ such that $j\not\in C(D-u)$. Then $D-u$ contains no directed triangle. Thus $a(D-u)+c(D-u)\leq 2a(D-u)\leq 2\lfloor \frac{4^2}{2}\rfloor=16$. So $d^s(u)+d(u)\geq 12$. Note that $d^s(u)+d(u)\leq 2d(u)-a(D^j)+1$. So
\begin{equation}\label{eqt5}
a(D^j)\leq 2d(u)-11.
\end{equation}

On the other hand, let $D'$ be an arc-colored digraph such that $V(D')=V(D)$ and $A(D')=(A(D)\backslash A(D^j))\cup \{e\}$. Here $e$ is an arc from $D^j$. Then we have $28-a(D^j)+1=a(D')+c(D')\leq 2a(D')\leq 2\lfloor \frac{5^2}{2}\rfloor.$
Thus
\begin{equation}\label{eqt6}
a(D^j)\geq 5.
\end{equation}
Combine (\ref{eqt5}) and (\ref{eqt6}). We have $d(u)\geq 8$. Note that $d(u)\leq 8$. We have $d(u)=8$,  $a(D^j)=5$ and there must be a vertex $v\in V(D-u)$ such that $C(uv)=C(vu)=j$. Let $D''$ be an arc-colored digraph such that $V(D'')=V(D)$ and $A(D'')=(A(D)\backslash A(D^j))\cup \{uv,vu\}$. Then each triangle in $D''$ must be a rainbow triangle. So $D''$ contains no triangles. We have
$$a(D)-a(D^j)+2=a(D'')\leq \lfloor \frac{5^2}{2}\rfloor.$$
Thus $a(D)\leq 15$. So $c(D)\geq 13=\lfloor \frac{5^2}{2}\rfloor+1$, which implies that $D$ contains a rainbow triangle, a contradiction.

\begin{case}
	$x_{18}\geq 1$.
\end{case}
 In this case,  there is a vertex $u\in V(D)$ such that $a(D-u)+c(D-u)= 18$ and $d(u)+
 d^{s}(u)\geq 10$. By Lemma \ref{lem4}, we can see that $D-u\cong\overleftrightarrow{K}_{4}$ and $D-u\in \mathcal{G}_4$.  If $D\cong \overleftrightarrow{K}_5$, then we obtain a rainbow triangle by Theorem \ref{thm1}, a contradiction. So $d(u)\leq 7$  and $d^s(u)\geq 3$. By Lemma \ref{lem1}, we can assume that $CN^-(u)\cap C^s(u)=\emptyset$. Then $d^s(u)\leq 4$. Let the two monochromatic cycles in $D-u$ are $xyzwx$ and $wzyxw$ with colors $\alpha$ and $\beta$, respectively.
 Assume that $C(ux), C(uy)$ and $C(uz)$ are three distinct colors in $C^s(u)$. If $yu\in A(D)$, then consider triangles $uxyu$ and $uzyu$, we get $\alpha=C(yu)=\beta$, a contradiction. So $yu\not\in A(D)$. Similarly, we can get $xu\notin A(D)$, $zu\notin A(D)$, $wu\notin A(D)$. So $d(u)\leq 4$, and thus $d(u)+d^s(u)\leq 8$, a contradiction. 

The proof is complete.\qed

To prove Theorem \ref{thm3}, we need the following famous theorem of Moon \cite{Moon: 1966}:
\begin{thm}[Moon's theorem]
Let $T$ be a strongly connected tournament on $n\geq 3$ vertices. Then each vertex of $T$ is contained in a cycle of length $k$ for all $k\in [3,n]$. In particular, a tournament is hamiltonian if and only if it is strongly connected.
\end{thm}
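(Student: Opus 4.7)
The plan is to prove Moon's theorem by establishing two ingredients: every vertex of $T$ lies on a directed triangle, and whenever a vertex $v$ lies on a $k$-cycle with $3\le k<n$, it also lies on a $(k+1)$-cycle. Iterating the second ingredient starting from a $3$-cycle through any $v$ delivers cycles of every length in $[3,n]$ through $v$, and the hamiltonicity characterization then follows by taking $k=n$ (the converse direction is trivial).

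For the $3$-cycle base, fix $v\in V(T)$ and set $A=N^+(v)$, $B=N^-(v)$; strong connectivity forces both nonempty with $V(T)=\{v\}\cup A\cup B$. The crucial sub-step is producing an arc from $A$ to $B$: by strong connectivity there is a path from some $a\in A$ back to $v$, which must eventually enter $B$, and since $v$ has no arc to $B$ (every vertex of $B$ dominates $v$ in the tournament), this first entry into $B$ must originate in $A$. The resulting $v\to a\to b\to v$ is the desired triangle.

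For the extension step, write $C_k=v_0v_1\cdots v_{k-1}v_0$ with $v=v_0$ and $U=V(T)\setminus V(C_k)$. If some $y\in U$ admits an ``insertion,'' i.e.\ $v_i\to y$ and $y\to v_{i+1}$ for some $i$, then splicing $y$ between $v_i$ and $v_{i+1}$ produces a $(k+1)$-cycle through $v$. Otherwise, along the cyclic sequence of arc-orientations between $y$ and $v_0,\ldots,v_{k-1}$ no ``$v_i\to y$'' is immediately followed by ``$y\to v_{i+1}$''; since on a cyclic sequence such transitions and their reverses are equinumerous, the sequence must be constant, so $U$ partitions into $U_1$ (dominated by all of $C_k$) and $U_2$ (dominating all of $C_k$). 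Strong connectivity rules out $U_i=U$ (else $C_k$ becomes either a sink or a source) and further produces an arc $y_1\to y_2$ with $y_1\in U_1$, $y_2\in U_2$ (otherwise all out-arcs of $U_1$ stay inside $U_1$, contradicting strongness). Then $v_0v_1\cdots v_{k-2}y_1y_2v_0$ is a $(k+1)$-cycle through $v$, using $v_{k-2}\to y_1$, $y_1\to y_2$, and $y_2\to v_0$.

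The main obstacle is the ``no insertion'' case of the extension: one must force the clean bipartition $U=U_1\cup U_2$ and exhibit a correctly oriented crossing arc $y_1\to y_2$. Both sub-arguments lean on set-boundary (sink/source) arguments from strong connectivity, and each requires carefully ruling out both orientations of a potentially problematic configuration before concluding the structural fact. Once these are in place the assembly of the longer cycle is purely mechanical.
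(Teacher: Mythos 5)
Your proof is correct: it is the standard proof of Moon's vertex-pancyclicity theorem (a $3$-cycle through each vertex via the out-/in-neighbourhood split, then extension of a $k$-cycle to a $(k+1)$-cycle by insertion or, failing that, by the bipartition of the outside vertices into those dominated by and those dominating the cycle, with a crossing arc supplied by strong connectivity). Note, however, that the paper does not prove this statement at all --- it quotes it as a known result with a citation to Moon's 1966 paper --- so there is no in-paper argument to compare against; your write-up simply supplies the classical proof. One cosmetic slip: in the base case the triangle is $v\to x\to b\to v$, where $x$ is the predecessor (in $A$) of the first vertex $b$ of the path lying in $B$; $x$ need not be the original vertex $a$, so reusing the letter $a$ there is imprecise, though the argument is unaffected.
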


\noindent\textbf{Proof of Theorem \ref{thm3}.}
By induction on $n$. For $n=3$, since $D$ is strongly connected, we can see that $D$ is a directed triangle. If $c(D)\geq \frac{n(n-1)}{2}-n+3=3$, then all arcs of $D$ have distinct colors. So $D$ is a rainbow triangle.

Suppose that every arc-colored strongly connected tournament $D'$ of order $n-1$ with $c(D')\geq \frac{(n-1)(n-2)}{2}-(n-1)+3$ contains a rainbow triangle for $n\geq 4$. Now we consider an arc-colored strongly connected tournament $D$ of order $n$. Since $D$ is strongly connected, by Moon's theorem, $D$ contains a directed $(n-1)$-cycle $C$. Let $v$ be the vertex not in $C$. Then $D-v$ contains a hamiltonian cycle $C$. Thus, $D-v$ is strongly connected. If $c(D)\geq \frac{n(n-1)}{2}-n+3$ and $D$ contains no rainbow triangles, then $D-v$ contains no rainbow triangles either, and hence $c(D-v)\leq \frac{(n-1)(n-2)}{2}-(n-1)+2$. So we have
\begin{equation*}
d^{s}(v)\geq \frac{n(n-1)}{2}-n+3-\left(\frac{(n-1)(n-2)}{2}-(n-1)+2\right)=n-1.
\end{equation*}
This implies that $CN(v)\bigcap C(D-v)=\emptyset$ and every two different arcs incident to $v$ have distinct colors. Since $D$ is strongly connected, there exists an arc from $N^{+}(v)$ to $N^{-}(v)$. Assume that $wu\in A(D)$, where $w\in N^{+}(v)$ and $u\in N^{-}(v)$, then $vwuv$ is a directed triangle. Since $wu\in A(D-v)$ and $vw$, $uv$ are two different arcs incident to $v$, we can see that $vwuv$ is a rainbow triangle, a contradiction.

The proof is complete. \qed

\section{Concluding remarks}
By Lemmas \ref{lem3} and \ref{lem4} in Theorem \ref{thm2}, we proved that for $n=3$, $4$, if $a(D)+c(D)=a(\overleftrightarrow{K}_{n})+f(\overleftrightarrow{K}_{n})-1$ and $D$ contains no rainbow triangles, then $D\cong\overleftrightarrow{K}_{n}$. We conjecture that this is true for all $n\geq 5$.

\begin{conj}
Let $D$ be an arc-colored digraph of order $n\geq 5$ without containing rainbow triangles. If
$
a(D)+c(D)=n(n-1)+\lfloor\frac{n^{2}}{4}\rfloor+1,
$
then $D\cong\overleftrightarrow{K}_{n}$.
\end{conj}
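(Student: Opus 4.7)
The plan is to prove the conjecture by strong induction on $n$, with the base cases $n=3,4$ already handled by Lemmas~\ref{lem3} and~\ref{lem4}. The case $n=5$ must be argued directly, and the induction step handles $n\geq 6$.

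For the inductive step, let $D$ be an arc-colored digraph of order $n\geq 6$ satisfying the hypotheses. The first task is to locate a vertex $v$ with $D-v\cong\overleftrightarrow{K}_{n-1}$. Since $D-v$ still contains no rainbow triangle, Theorem~\ref{thm2} applied to $D-v$ gives $a(D-v)+c(D-v)\leq(n-1)(n-2)+\lfloor(n-1)^{2}/4\rfloor+1$, and hence for every $v\in V(D)$,
\[
d(v)+d^{s}(v)\ \geq\ 2(n-1)+\Bigl\lfloor\tfrac{n^{2}}{4}\Bigr\rfloor-\Bigl\lfloor\tfrac{(n-1)^{2}}{4}\Bigr\rfloor.
\]
Summing over $v$, using $\sum_{v}d^{s}(v)\leq 2c(D)$, and comparing with $2a(D)+2c(D)=2(n(n-1)+\lfloor n^{2}/4\rfloor+1)$, a short arithmetic check rules out the possibility that every vertex satisfies strict inequality; hence some $v$ must attain equality. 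For this $v$, the value $a(D-v)+c(D-v)$ hits the extremal bound, so the induction hypothesis (or the base case $n-1=5$) forces $D-v\cong\overleftrightarrow{K}_{n-1}$, and the resulting $c(D-v)=\lfloor(n-1)^{2}/4\rfloor+1=f(\overleftrightarrow{K}_{n-1})-1$ places $D-v\in\mathcal{G}_{n-1}$.

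The second task is to exploit Theorem~\ref{thm1+} to show $d(v)=2(n-1)$. For $n\geq 7$, $D-v$ has the unique structure $V(D-v)=A\cup B$ with all arcs from $A$ to $B$ (or all from $B$ to $A$) forming a rainbow spanning subdigraph $H$ and all remaining arcs colored by a single new color $c$. Suppose for contradiction that $d(v)<2(n-1)$; then $d^{s}(v)$ strictly exceeds $\lfloor n^{2}/4\rfloor-\lfloor(n-1)^{2}/4\rfloor$, the number of new saturated colors $v$ would carry in the canonical $\mathcal{G}_{n}$-extension of $D-v$. Scanning the triangles $va_{i}b_{j}v$, $va_{i}a_{j}v$, $vb_{i}b_{j}v$ and $vb_{j}a_{i}v$, each saturated (hence new) color on an arc at $v$ forces a matching color on a companion arc at $v$: this companion color must either equal a unique rainbow color $C(a_{i}b_{j})$, or equal $c$, or equal another color saturated at $v$. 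Careful bookkeeping of these constraints should reveal that the only arrangement absorbing the excess saturated colors without producing a rainbow triangle is the canonical one, which requires $d(v)=2(n-1)$, a contradiction. For $n=6$ the same scheme applies but must be split according to the three Types I, II, III of $\mathcal{G}_{5}$.

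The main obstacle will be this second step. Although $D-v$ is rigid, the large monochromatic block of color $c$ on non-$H$ arcs gives many arcs at $v$ the freedom to re-use $c$ without immediately forming a rainbow triangle, so the conclusion is not a one-line triangle chase. The delicate accounting is that each missing arc at $v$ shifts one unit from the ``re-used color budget'' to the ``new saturated color budget,'' and one has to show that any such shift inevitably closes a rainbow triangle with an arc of $H$. The base case $n=5$ is expected to be the hardest part of the proof: first one uses the counting argument to exhibit a vertex $w$ with $d(w)+d^{s}(w)=9$ and deduce (from the fact that $D-w'\not\cong\overleftrightarrow{K}_{4}$ whenever $w'$ lies outside the missing-arc set) that all missing arcs of $D$ must be incident to a single vertex $w$; then one applies the unique structure of $D-w\in\mathcal{G}_{4}$ given by Claim~\ref{claim1} together with a detailed analysis of the triangles $wu_{i}u_{j}w$ to force $d(w)=8$, and hence $D\cong\overleftrightarrow{K}_{5}$.
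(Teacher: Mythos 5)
You should first be aware that the statement you were asked to prove is the paper's concluding \emph{Conjecture}: the authors explicitly leave it open (they only establish the analogous extremal statements for $n=3,4$ via Lemmas~\ref{lem3} and~\ref{lem4}), so there is no proof in the paper to compare against, and your task was in effect to settle an open problem. Your outline is the natural line of attack --- induction, the $d(v)+d^{s}(v)$ counting to locate an extremal vertex $v$, then the rigidity of $D-v\in\mathcal{G}_{n-1}$ from Theorem~\ref{thm1+} --- and the first half is sound: for $n\geq 6$ the averaging argument does force some $v$ with $a(D-v)+c(D-v)=(n-1)(n-2)+\lfloor (n-1)^{2}/4\rfloor+1$ (the slack between $\sum_v(d(v)+d^s(v))$ and $n$ times the pointwise lower bound is $2$ for $n$ even and $(n+3)/2$ for $n$ odd, both less than $n$), and the induction hypothesis then gives $D-v\cong\overleftrightarrow{K}_{n-1}$ with $D-v\in\mathcal{G}_{n-1}$.

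However, the proposal is not a proof; two gaps are decisive. First, the central step --- showing that no arc at $v$ can be missing --- is only described as ``careful bookkeeping \ldots should reveal,'' and this is exactly where the difficulty lives: with $d(v)=2(n-1)-k$ the vertex $v$ carries $k$ extra saturated colors but also loses $k$ triangles through $v$, so each missing arc removes a constraint at the same time as it adds a color to be absorbed, and you have not exhibited the mechanism that closes a rainbow triangle. (Lemma~\ref{lem1} does apply here since it only needs $D-v\cong\overleftrightarrow{K}_{n-1}$, which would be the right starting point, but the argument after that is absent.) Second, and more concretely, your plan for the base case $n=5$ fails at its first step: from $a(D)+c(D)=27$ and Theorem~\ref{thm2} one only gets $d(w)+d^{s}(w)\geq 9$ for every $w$ together with $\sum_{w}(d(w)+d^{s}(w))\leq 2a(D)+2c(D)=54$, and since $5\cdot 9=45$ leaves a slack of $9$, this averaging does \emph{not} exhibit a vertex with $d(w)+d^{s}(w)=9$; all five vertices could a priori have $d(w)+d^{s}(w)=10$. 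You would need something like the refined $x_{p}$/$y_{i}$ double count the paper uses in its $n=5$ case of Theorem~\ref{thm2}, adapted to the tighter hypothesis $a(D)+c(D)=27$. The $n=6$ step, which must branch over all three types of $\mathcal{G}_5$, is likewise not carried out.
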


\end{document}